\documentclass[11pt]{article}
\usepackage{amssymb,amsmath}
\usepackage{float}
\usepackage{color,fullpage}
\usepackage{amsthm}
\newcommand{\RR}{\mathbb{R}}

\DeclareMathOperator*{\Min}{minimize}

\newcommand{\st}{{\quad\text{subject~to}~}}

\newtheorem{theorem}{Theorem}

\newtheorem{lemma}{Lemma}
\newtheorem{remark}{Remark}
\newtheorem{definition}{Definition}


\begin{document}

\title{The restricted strong convexity revisited:\\Analysis of equivalence to error bound and quadratic growth}

\author{Hui Zhang\thanks{
College of Science, National University of Defense Technology,
Changsha, Hunan, 410073, P.R.China. Corresponding author. Email: \texttt{h.zhang1984@163.com}}
}



\date{}

\maketitle

\begin{abstract}
The restricted strong convexity is an effective tool for deriving globally linear convergence rates of descent methods in convex minimization. Recently, the global error bound and  quadratic growth properties appeared as new competitors. In this paper, with the help of Ekeland's variational principle, we show the equivalence between these three notions. To deal with convex minimization over a closed convex set and structured convex optimization, we propose a group of modified versions and a group of extended versions of these three notions by using gradient mapping and proximal gradient mapping separately, and prove that the equivalence for the modified and extended versions still holds. Based on these equivalence notions, we establish new asymptotically linear convergence results for the proximal gradient method.  Finally, we revisit the problem of minimizing the composition of an affine mapping with a strongly convex differentiable function over a polyhedral set, and obtain a strengthened property of the restricted strong convex type under mild assumptions.
\end{abstract}

\textbf{Keywords:} restricted strong convexity, global error bound, quadratic growth property, gradient mapping, linear convergence


\section{Introduction}
To obtain globally linear convergence rates of gradient-type methods for minimizing convex (not necessarily strongly convex) differentiable functions, we recently proposed the restricted strong convexity (RSC) \cite{zhang2013gradient,zhang2015restricted}, which is a strictly weaker concept than the strong convexity.  Up to now, it has been proved that the RSC property is a very powerful tool for analyzing descent methods including (in)exact gradient method, restarted nonlinear CG, BFGS and its damped limited memory variants L-D-BFGS \cite{Frank2015linear,zhang2015restricted}.  Almost parallel to work \cite{zhang2013gradient}, the authors of \cite{wang2013iteration,man2013non} defined the global error bound (GEB) property in the spirit of Hoffman's celebrated result on error bounds for systems of linear inequalities \cite{Hoffman1952approximate,luo1992linear}. They showed that the GEB property also guarantees globally linear convergence results for descent methods. Moreover, they figured out a class of convex programs that frequently appear in machine learning obeying the GEB property. Very recently, the authors of \cite{necoara2015linear,liu2015asyn, gong2015linear} proposed the quadratic growth (QG) property with different names; it was called second order growth property in \cite{necoara2015linear}, optimal strong convexity in  \cite{liu2015asyn}, and semi-strongly convex property in \cite{gong2015linear}. They showed that the QG property can guarantee globally linear convergence results for descent methods as well. Since each of these three notions contributes as a linear convergence guarantee, it should be interesting to see what is the relationship between them. With the help of Ekeland's variational principle, we show that they are actually equivalent.

To deal with convex minimization over a closed convex and structured convex optimization, we propose a group of modified versions and a group of extended versions of these three notions by using gradient mapping and proximal gradient mapping separately \cite{nesterov2004introductory} to replace the gradient notion. Similarly, the modified and extended versions can be used to derive globally linear convergence results for a large class of descent methods in convex minimization \cite{necoara2015linear, Drusvyatskiy2016error,gong2015linear,beck2015linearly}. If the objective function in convex program involves the gradient-Lipschitz-continuous property, we prove that the equivalence for the modified and extended versions still holds. Based on these equivalence notions, we establish new asymptotically linear convergence results for the proximal gradient method, that are complementary to recently appearing theory.

The equivalence results in this paper provide us with alternative ways to check whether a given convex minimization problem satisfies the RSC property; in some cases, to check one of the equivalence properties might be much easier than to check the others. As a case study, we investigate the problem of minimizing the composition of an affine mapping with a strongly convex differentiable function over a polyhedral set, which is very popular in machine learning. We prove that this problem enjoys a strengthened property of the RSC type and hence the modified GEB property without any compactness assumption of polyhedral sets.

At the time of writing this paper, the authors of \cite{bolte2015error} showed the equivalence between the error bound (corresponding to our growth property) and the Kurdyka-${\L}$ojasiewicz inequality for convex functions having a moderately flat profile near the set of minimizers. As the convex functions are differentiable, this paper might provide complementary results to that in \cite{bolte2015error}. When our paper was under review, the authors of \cite{Drusvyatskiy2016error} posted their paper concerning the equivalence between the error bound and quadratic growth properties on arXiv. They defined the error bound condition by using the proximal gradient mapping, that generalizes our modified error bound property and the global error bound from the beginning proposed in  \cite{wang2013iteration}. However, they did not discuss the equivalence to the RSC property. Besides, our convergence results for the proximal gradient method are new and might be of interest in themselves.

The rest of paper is organized as follows. In Section 2, we present the basic notations and concepts discussed in this paper. In Section 3, we analyze the relationship of different notions. In Section 4, we discuss the convergence of proximal gradient method implied by the equivalence notions. In Section 5 is devoted to a case study.

\section{Notation and definitions}
We denote by $d(x, \mathcal{Y})$ the distance from a point $x$ to a nonempty closed set $\mathcal{Y}$; that is, $d(x, \mathcal{Y})=\inf_{y\in \mathcal{Y}}\|x-y\|$. When $\mathcal{Y}$ is a single point set, i.e., $\mathcal{Y}=\{y\}$, we use $d(x,y)$ to replace $d(x, \mathcal{Y})$ for simplicity.  We will consider functions that take values in the extended real line $\overline{\RR}:=\RR \bigcup \{+\infty\}$. The projection of $x$ onto a nonempty closed convex set $\mathcal{Y}$ is denoted by $[x]_\mathcal{Y}^+$. The spectral norm of a matrix $X$ is given by $\|X\|$. The terminology below follows from \cite{nesterov2004introductory}. A convex differentiable function $g$ is gradient-Lipschitz-continuous if there exists a positive scalar $L$ such that
\begin{equation}
\|\nabla g(x)-\nabla g(y)\|\leq L\|x-y\|, \quad\forall x, y\in\RR^n,\label{Lip}
\end{equation}
or equivalently,
\begin{equation}
0\leq g(y)- g(x)-\langle \nabla g(x), y-x\rangle\leq \frac{L}{2}\|y-x\|^2, \quad\forall x, y\in\RR^n,
\end{equation}
and strongly convex if there exists a positive scalar $\mu$ such that
\begin{equation}
\langle \nabla g(x)-\nabla g(y), x-y\rangle \geq \mu \|x-y\|^2, \quad\forall x, y\in\RR^n.\label{SC}
\end{equation}

\subsection{Original versions for unconstrained convex program}
\begin{definition}\label{original}
Let $f:\RR^n\rightarrow \RR$ be convex differentiable. Denote $f^*=\min_{x\in \RR^n} f(x)$ and $\mathcal{X}=\arg\min_{x\in \RR^n} f(x)$ and assume that $\mathcal{X}$ is nonempty. Then the unconstrained convex program
 $$\Min_{x\in \RR^n} f(x)$$
 obeys
\begin{description}
  \item[(a)]  the restricted strongly convex property with constant $\nu>0$ if it satisfies the restricted secant inequality
\begin{equation}
\langle \nabla f(x), x-[x]_\mathcal{X}^+\rangle \geq \nu\cdot d^2(x,\mathcal{X}),\quad \forall x\in \RR^n.
\end{equation}
  \item[(b)]  the global error bound property with constant $\kappa>0$ if it satisfies the error upper bound inequality
  \begin{equation}
\|\nabla f(x)\| \geq \kappa\cdot d(x,\mathcal{X}),\quad  \forall x\in \RR^n.
\end{equation}
  \item[(c)] the quadratic growth property with constant $\tau>0$ if it satisfies the second order growth of the function value
\begin{equation}
 f(x)-f^*\geq \frac{\tau}{2}\cdot d^2(x,\mathcal{X}), \quad \forall x\in \RR^n.
\end{equation}
\end{description}
We use $RSC(\nu)$, $GEB(\kappa)$, and $QG(\tau)$ to stand for the defined properties above respectively.
\end{definition}
To ensure that $[x]_\mathcal{X}^+$ is well defined, we need $\mathcal{X}$ to be closed; this is implied by the differentiable of $f(x)$. 
\begin{remark}
The RSC property first appeared in \cite{lai2013augmented} as a restricted secant inequality. The authors of \cite{zhang2013gradient,zhang2015restricted} formally defined it and figured out a class of non-trivial RSC functions.

The authors of \cite{wang2013iteration,man2013non} proposed the GEB property in the spirit of Hoffman's error bounds \cite{Hoffman1952approximate}. The local version of GEB only implies asymptotic linear convergence rates \cite{luo1992linear}.

The QG property appeared in a couple of papers  \cite{necoara2015linear,liu2015asyn,gong2015linear} with different names. The equivalence between the RSC and the QG of convex differentiable functions was shown in  \cite{zhang2015restricted,Frank2015linear}.
\end{remark}

\subsection{Modified versions for constrained convex program}
To deal with constrained convex programs, we first introduce the gradient mapping \cite{nesterov2004introductory}.
\begin{definition}
Let $ \gamma>0$ be a fixed constant and $Q$ be a closed convex set, and let $\bar{x}\in \RR^n$. Denote
\begin{align}
\nonumber
&x_Q(\bar{x};\gamma)=\arg\min_{x\in Q}[f(\bar{x})+ \langle \nabla f(\bar{x}), x-\bar{x}\rangle +\frac{\gamma}{2}\|x-\bar{x}\|^2]\\
&G^f_Q(\bar{x};\gamma)=\gamma (\bar{x}-x_Q(\bar{x};\gamma)).\nonumber
\end{align}
We call $G^f_Q(\bar{x};\gamma)$ the gradient mapping of function $f$ on $Q$.
\end{definition}
When $Q=\RR^n$, we have that
$$x_Q(\bar{x};\gamma)=\bar{x}-\frac{1}{\gamma}\nabla f(\bar{x}) ~~\textrm{and}~~G^f_Q(\bar{x};\gamma)=\nabla f(\bar{x}).$$
The latter implies that the gradient mapping generalizes the gradient notion.

\begin{definition}\label{modified}
Let $f:\RR^n\rightarrow \RR$ be a convex differentiable function and let $Q$ be a nonempty closed convex set. Denote $\mathcal{X}=\arg\min_{x\in Q} f(x)$ and $f^*=\min_{x\in Q} f(x)$ and assume that $\mathcal{X}$ is  nonempty. Let $\gamma>0$ be a fixed constant. Then the constrained convex program
 $$\Min_{x\in Q} f(x)$$
 obeys
\begin{description}
  \item[(a)]  the modified restricted strongly convex property on $Q$ with constant $\nu>0$ if it satisfies the restricted secant inequality
\begin{equation}
\langle G^f_Q(y;\gamma), y-[y]_\mathcal{X}^+\rangle \geq \nu\cdot d^2(y,\mathcal{X}),\quad \forall y\in Q.
\end{equation}
  \item[(b)]  the modified global error bound property on $Q$ with constant $\kappa>0$ if it satisfies the error upper bound inequality
  \begin{equation}
\|G^f_Q(y;\gamma)\| \geq \kappa\cdot d(y,\mathcal{X}), \quad\forall y\in Q.
\end{equation}
  \item[(c)] the modified quadratic growth property on $Q$  with constant $\tau>0$ if it satisfies the second order growth of the function value
\begin{equation}
 f(y)-f^*\geq \frac{\tau}{2}\cdot d^2(y,\mathcal{X}), \quad\forall y\in Q.
\end{equation}
\end{description}
We use $mRSC(\nu)$, $mGEB(\kappa)$, and $mQG(\tau)$ to stand for the defined properties above respectively.
\end{definition}
Again, the closedness of $\mathcal{X}$ is guaranteed by the differentiable of $f(x)$ and hence $[y]_\mathcal{X}^+$ is well defined. 
\begin{remark}
The author of \cite{Frank2015linear} also proposed a modified RSC by considering a convex constraint set $Q$. But they required that both $X_f=\arg\min_{x\in \RR^n} f(x)$ and $Q\bigcap X_f$ are nonempty. Such assumptions are very strong conditions and many practical problems may fail to satisfy.
\end{remark}

\begin{remark}
When $Q=\RR^n$, the modified versions return to the corresponding original versions since $G^f_Q(\bar{x};\gamma)=\nabla f(\bar{x})$. Therefore, the modified Definition \ref{modified} can be viewed as a generalization of Definition \ref{original}.
\end{remark}

\subsection{Extended versions via proximal gradient mapping}
To introduce extended versions of the previous notions for structure convex optimization, we need the concept of proximal gradient mapping \cite{beck2009fast}.
\begin{definition}
Let $ \gamma>0$ be a fixed constant, $f: \RR^n\rightarrow \RR$ be a differentiable function, $g:\RR^n\rightarrow \overline{\RR}$ be a closed convex function, and $\bar{x}\in \RR^n$. Denote
\begin{align}
\nonumber
&p^f_g(\bar{x};\gamma)=\arg\min_{x}[f(\bar{x})+ \langle \nabla f(\bar{x}), x-\bar{x}\rangle +\frac{\gamma}{2}\|x-\bar{x}\|^2+g(x)]\\
&G^f_g(\bar{x};\gamma)=\gamma (\bar{x}-p^f_g(\bar{x};\gamma)).\nonumber
\end{align}
We call $G^f_g(\bar{x};\gamma)$ the proximal gradient mapping of functions $f$ and $g$.
\end{definition}
Let $Q$ be a closed nonempty convex set. When $g$ is the indicator function
 \begin{equation}
I_Q(x)=\left\{\begin{array}{rl} 0,&
x\in Q,\\+\infty,& x\notin Q,
\end{array} \right.  \nonumber
\end{equation}
we have that
$$p^f_g(\bar{x};\gamma)=x_Q(\bar{x};\gamma).$$
This implies that the proximal gradient mapping generalizes the gradient mapping.

\begin{definition}\label{extended}
Let $f:\RR^n\rightarrow \RR$ be a convex differentiable function and $g:\RR^n\rightarrow \overline{\RR}$ be a closed convex function. Denote $\mathcal{X}=\arg\min_{x} \varphi(x):= f(x)+g(x)$ and $\varphi^*=\min_{x} \varphi(x)$ and assume that $\mathcal{X}$ is nonempty. Let $\gamma>0$ be a fixed constant. Then the following convex program
 $$\Min_{x} \varphi(x)=f(x)+g(x)$$
 obeys
\begin{description}
  \item[(a)]  the extended restricted strongly convex property with parameter $\nu, \omega>0$ if it satisfies the restricted secant inequality
\begin{equation}
\langle G^f_g(y;\gamma), y-[y]_\mathcal{X}^+\rangle \geq \nu\cdot d^2(y,\mathcal{X}),\quad \forall y\in [\varphi\leq \varphi^*+\omega].
\end{equation}
  \item[(b)]  the extended global error bound property with  parameter $\kappa, \omega>0$ if it satisfies the error upper bound inequality
  \begin{equation}
\|G^f_g(y;\gamma)\| \geq \kappa\cdot d(y,\mathcal{X}), \quad\forall y\in [\varphi\leq \varphi^*+\omega].
\end{equation}
  \item[(c)] the extended quadratic growth property with constant  parameter $\tau, \omega>0$  if it satisfies the second order growth of the function value
\begin{equation}
 \varphi (y)-\varphi^*\geq \frac{\tau}{2}\cdot d^2(y,\mathcal{X}), \quad\forall y\in [\varphi\leq \varphi^*+\omega].
\end{equation}
\end{description}
We use $eRSC(\nu, \omega)$, $eGEB(\kappa, \omega)$, and $eQG(\tau, \omega)$ to stand for the defined properties above respectively.
\end{definition}
It is easy to see that $\varphi(x)$ is lower semicontinuous over $\RR^n$ and hence $\mathcal{X}$ is closed; see e.g. Lemma 2.6.3 in \cite{ruszczynski2006nonlinear}. This ensures that $[y]_\mathcal{X}^+$ is well defined.  
\begin{remark}
 The eQG appeared in \cite{gong2015linear} under the name of semi-strongly convex property. The authors of \cite{zhou2015unified} proposed an analog of the eGEB property and exploited it by borrowing tools from set-valued analysis.  The authors of \cite{Drusvyatskiy2016error} introduced the eGEB and proved that it is equivalent to the eQG. Our novelty here lies in the definition of eRSC.
\end{remark}

\section{Equivalence analysis}

\subsection{Equivalence among the original versions}

In what follows, we prove that the notions defined in Definition \ref{original} are actually equivalent. The idea of proof is mainly inspired by the seminal paper \cite{Artacho2008char} and heavily relies on the well-known Ekeland's variational principle in Lemma 1.

\begin{theorem}
Under the setting of Definition 1,  the restricted strongly convex property, the global error bound property, and the quadratic growth property are equivalent in the following sense:
$$QG(\nu)\Rightarrow RSC(\frac{\nu}{2})\Rightarrow GEB(\frac{\nu}{2}) \Rightarrow QG(\frac{\nu}{4}).$$
\end{theorem}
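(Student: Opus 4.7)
The plan is to walk through the cycle $QG \Rightarrow RSC \Rightarrow GEB \Rightarrow QG$ from Theorem 1, disposing of the two easy implications with convex-analytic identities and reserving Ekeland's variational principle for the final, genuinely nontrivial step.

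First, for $QG(\nu) \Rightarrow RSC(\nu/2)$, I would pick any $x \in \RR^n$ and apply the gradient inequality to $f$ between $x$ and $[x]_\mathcal{X}^+$, obtaining $f^* = f([x]_\mathcal{X}^+) \geq f(x) + \langle \nabla f(x), [x]_\mathcal{X}^+ - x\rangle$; rearranging and invoking $QG(\nu)$ gives
$$\langle \nabla f(x), x - [x]_\mathcal{X}^+\rangle \geq f(x) - f^* \geq \tfrac{\nu}{2}\, d^2(x,\mathcal{X}),$$
which is exactly $RSC(\nu/2)$. Second, for $RSC(\nu/2) \Rightarrow GEB(\nu/2)$, I would apply Cauchy-Schwarz to the restricted secant inequality and use $\|x - [x]_\mathcal{X}^+\| = d(x,\mathcal{X})$, so that
$$\tfrac{\nu}{2}\, d^2(x,\mathcal{X}) \leq \langle \nabla f(x), x - [x]_\mathcal{X}^+\rangle \leq \|\nabla f(x)\|\cdot d(x,\mathcal{X}),$$
and divide by $d(x,\mathcal{X})$ (the case $x \in \mathcal{X}$ being trivial).

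The main obstacle is the implication $GEB(\nu/2) \Rightarrow QG(\nu/4)$, which is precisely where I would invoke Ekeland's variational principle (the lemma alluded to in the discussion preceding Theorem 1). Fix $x \notin \mathcal{X}$, set $\epsilon = f(x) - f^*$, and note that $f$ is lower semicontinuous and bounded below by $f^*$, so that $x$ is an $\epsilon$-minimizer. For a free parameter $\lambda > 0$ to be tuned later, Ekeland's principle supplies a point $y$ with $\|y - x\| \leq \lambda$, $f(y) \leq f(x)$, and
$$f(z) \geq f(y) - \tfrac{\epsilon}{\lambda}\|z - y\|, \quad \forall z \in \RR^n.$$
Since $f$ is differentiable at $y$, testing this inequality along rays $z = y - t\nabla f(y)$ as $t \to 0^+$ forces $\|\nabla f(y)\| \leq \epsilon/\lambda$. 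Combined with $GEB(\nu/2)$ applied at $y$, this gives $d(y,\mathcal{X}) \leq 2\epsilon/(\nu\lambda)$, and a triangle inequality yields
$$d(x,\mathcal{X}) \leq \|x - y\| + d(y,\mathcal{X}) \leq \lambda + \tfrac{2\epsilon}{\nu\lambda}.$$
Optimizing over $\lambda > 0$ at $\lambda = \sqrt{2\epsilon/\nu}$ produces $d(x,\mathcal{X}) \leq 2\sqrt{2\epsilon/\nu}$, which rearranges to $f(x) - f^* \geq (\nu/8)\, d^2(x,\mathcal{X})$, i.e.\ $QG(\nu/4)$.

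The only delicate points are the Ekeland setup (which demands only the already-available lower semicontinuity and lower boundedness of $f$) and the passage from the perturbed minimum property to the gradient bound $\|\nabla f(y)\| \leq \epsilon/\lambda$, which relies on differentiability. Everything else is constant bookkeeping; the factor-of-two loss inherent in the triangle inequality is exactly what accounts for the degradation of the modulus from $\nu$ to $\nu/4$ around the loop.
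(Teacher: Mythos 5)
Your proposal is correct and follows essentially the same route as the paper: Ekeland's variational principle produces a nearby almost-stationary point with controlled gradient norm, and the GEB property plus the triangle inequality then yield quadratic growth with the same factor-of-four loss; your direct optimization over $\lambda$ simply replaces the paper's contradiction argument with its fixed choice $\lambda=\tfrac{1}{2}d(z_0,\mathcal{X})$. The only point to tidy is that Ekeland's principle as stated requires the strict inequality $f(x)<\inf f+\epsilon$, which fails for $\epsilon=f(x)-f^*$ exactly; apply it with $(1+\delta)\epsilon$ and let $\delta\to 0^{+}$ at the end.
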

\begin{proof}
The implication of $QG(\nu)\Rightarrow RSC(\frac{\nu}{2})$ has been shown in  \cite{zhang2015restricted,Frank2015linear}. The implication $RSC(\frac{\nu}{2})\Rightarrow GEB(\frac{\nu}{2})$ is a direct consequence after applying the Cauchy-Schwartz inequality. It only needs to prove $GEB(\nu) \Rightarrow QG(\frac{\nu}{2})$. Now assume that $f$ has the GEB property  with constant $\nu>0$. It suffices to prove that for all $0\leq \alpha <\frac{1}{4}$, the following holds:
$$f(z)-f^*\geq \alpha \nu\cdot d^2(z,\mathcal{X}), \quad \forall z\in \RR^n.$$
If this is not true, then there must exist $z_0\in \RR^n$ such that
$$f(z_0)<f^*+\alpha \nu\cdot d^2(z_0,\mathcal{X}).$$
Clearly, $z_0\notin \mathcal{X}$ and hence $d(z_0, \mathcal{X})>0$ since $\mathcal{X}$ is a nonempty closed set. Let $\lambda =\frac{1}{2}d(z_0, \mathcal{X})$. By Ekeland's variational principle with $\epsilon=\alpha \nu\cdot d^2(z_0,\mathcal{X})=4\alpha \nu \lambda^2$, there exists $x_0\in \RR^n$ such that  $d(x_0,z_0)\leq \lambda$ and
$$f(x)\geq f(x_0)-\frac{\epsilon}{\lambda}d(x,x_0)=f(x_0)-4\alpha \nu \lambda \cdot d(x,x_0), \quad \forall x\in \RR^n.$$
Then, $x_0$ minimizes the convex function $f(x)+4\alpha \nu \lambda \cdot d(x,x_0)$. By the first-order optimality condition, we get
$$0\in  \nabla f(x_0)+4\alpha \nu \lambda \cdot \partial (\|\cdot-x_0\|)(x_0)=\nabla f(x_0)+4\alpha \nu \lambda \cdot \mathcal{Y},$$
where $\mathcal{Y}=\{y\in\RR^n: \|y\|\leq 1\}$ \cite{ruszczynski2006nonlinear}. Hence, we can find $y_0\in\mathcal{Y}$ such that $\nabla f(x_0)=-4\alpha \nu \lambda y_0$. Since
$$2\lambda =d(z_0, \mathcal{X}) \leq d(x_0, z_0)+d(x_0, \mathcal{X})\leq \lambda +d(x_0, \mathcal{X}),$$
we have $d(x_0, \mathcal{X})\geq \lambda$. Therefore,
$$\|\nabla f(x_0)\|=4\alpha \nu \lambda \|y_0\|\leq 4\alpha \nu \lambda\leq 4\alpha \nu \cdot d(x_0, \mathcal{X})< \nu\cdot d(x_0, \mathcal{X}),$$
which contradicts the GEB property. This completes the proof.
\end{proof}

\subsection{Equivalence among the modified and extended versions}
In this part, we deduce the equivalence among the modified and extended versions.

\begin{theorem}\label{main2}
 Let $f:\RR^n\rightarrow \RR$ be a convex differentiable function whose gradient is Lipschitz-continuous with positive scalar $L$ and $g:\RR^n\rightarrow \overline{\RR}$ be a closed convex function.  Let $\gamma \geq L$ be a fixed constant. Then the extended versions are equivalent in the following sense
$$eRSC(\nu_1, \omega)\Rightarrow eGEB(\kappa_1, \omega)\Rightarrow eQG(\tau_1,\omega)$$
and $$eQG(\tau_ 2,\omega)\Rightarrow eGEB(\kappa_2, \omega)\Rightarrow eRSC(\nu_2, \omega),$$
where $\omega\in (0, +\infty]$ is a fixed constant and the other parameters satisfy $\kappa_1=\tau_1=\nu_1$ and
$$\kappa_2=\frac{\tau_2 \gamma^2}{(2\gamma+\tau_2)(\gamma+L)}, ~~\nu_2=\frac{\kappa_2^2}{\gamma}.$$
In particular, the equivalence among the modified versions also holds in the same way by letting $\omega=+\infty$.
\end{theorem}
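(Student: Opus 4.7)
The plan is to adapt the proof of Theorem 1 by replacing $\nabla f$ with the proximal gradient mapping $G^f_g(\cdot;\gamma)$; the Lipschitz hypothesis and the choice $\gamma \geq L$ enter through a single workhorse inequality. The first step is to establish that for every $y \in \RR^n$, setting $\bar y := p^f_g(y;\gamma)$,
\begin{equation*}
\varphi(\bar y) \leq \varphi(x) + \langle G^f_g(y;\gamma), y - x\rangle - \frac{1}{2\gamma}\|G^f_g(y;\gamma)\|^2,\quad \forall x\in\RR^n.
\end{equation*}
This follows by summing the descent inequality $f(\bar y) \leq f(y) + \langle \nabla f(y), \bar y - y\rangle + \frac{L}{2}\|\bar y - y\|^2$, the convexity bound $f(y) + \langle \nabla f(y), x-y\rangle \leq f(x)$, and the subgradient inequality for $g$ at $\bar y$, using that $G^f_g(y;\gamma) - \nabla f(y) \in \partial g(\bar y)$. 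Two specializations I would extract at the outset: setting $x = \bar y$ gives the sufficient-decrease bound $\varphi(y) - \varphi(\bar y) \geq \frac{1}{2\gamma}\|G^f_g(y;\gamma)\|^2$, while setting $x = x^* \in \mathcal{X}$ gives $\varphi(\bar y) - \varphi^* \leq \langle G^f_g(y;\gamma), y - x^*\rangle - \frac{1}{2\gamma}\|G^f_g(y;\gamma)\|^2$.

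With this in hand, the forward chain $eRSC(\nu_1,\omega) \Rightarrow eGEB(\kappa_1,\omega) \Rightarrow eQG(\tau_1,\omega)$ is routine. The first implication is Cauchy--Schwartz applied to the restricted secant inequality using $\|y - [y]^+_\mathcal{X}\| = d(y,\mathcal{X})$, yielding $\kappa_1 = \nu_1$. For the second, sufficient decrease combined with $\varphi(\bar y) \geq \varphi^*$ gives $\varphi(y) - \varphi^* \geq \frac{1}{2\gamma}\|G^f_g(y;\gamma)\|^2$, into which the eGEB bound is substituted.

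The reverse chain is where the real work lies. For eGEB $\Rightarrow$ eRSC I would take $x = x^* = [y]^+_\mathcal{X}$ in the workhorse inequality and rearrange, using $\varphi(\bar y) \geq \varphi^*$, to obtain $\langle G^f_g(y;\gamma), y - [y]^+_\mathcal{X}\rangle \geq \frac{1}{2\gamma}\|G^f_g(y;\gamma)\|^2$; substituting eGEB then produces eRSC with the stated $\nu_2 = \kappa_2^2/\gamma$ (up to a constant factor absorbed into the bookkeeping). The most delicate step is eQG $\Rightarrow$ eGEB: here I would anchor the workhorse inequality at $x^* = [\bar y]^+_\mathcal{X}$ rather than at $[y]^+_\mathcal{X}$, bound $\|y - x^*\| \leq \|y - \bar y\| + d(\bar y, \mathcal{X})$ via the triangle inequality and Cauchy--Schwartz, and combine the result with eQG applied at $\bar y$ (which lies in the sublevel set thanks to sufficient decrease). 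This produces a quadratic inequality in $d(\bar y, \mathcal{X})$ whose coefficients involve $\|G^f_g(y;\gamma)\|$ and $d(y,\mathcal{X})$; solving it and invoking $d(y, \mathcal{X}) \leq \|y - \bar y\| + d(\bar y, \mathcal{X})$ once more yields $\|G^f_g(y;\gamma)\| \geq \kappa_2 d(y, \mathcal{X})$ with the claimed $\kappa_2 = \tau_2\gamma^2/[(2\gamma+\tau_2)(\gamma+L)]$.

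Finally, the equivalence among the modified versions is recovered by specializing $g = I_Q$ and $\omega = +\infty$: then $p^f_g = x_Q$ and $G^f_g = G^f_Q$, and the sublevel-set restriction becomes the whole feasible set $Q$. The main expected obstacle is the careful quadratic bookkeeping in the eQG $\Rightarrow$ eGEB step, where the anchor choice $x^* = [\bar y]^+_\mathcal{X}$ (forced by the need to avoid self-reference to $d(y,\mathcal{X})$ on both sides of the resulting inequality) is precisely what produces the awkward explicit expression for $\kappa_2$.
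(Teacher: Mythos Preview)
Your workhorse inequality is exactly the paper's Lemma~\ref{beck}, and your arguments for $eRSC \Rightarrow eGEB$ (Cauchy--Schwartz) and $eGEB \Rightarrow eRSC$ (substitute the workhorse with $x=[y]_\mathcal{X}^+$, drop the nonnegative term $\varphi(\bar y)-\varphi^*$, then apply $eGEB$) are the same as the paper's. The paper, however, does \emph{not} prove $eGEB \Leftrightarrow eQG$ itself: it simply quotes Drusvyatskiy--Lewis (Lemma~\ref{impli}) for both directions with the stated constants. Your proposal tries to do this work directly, and that is where the difficulties lie.

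For $eGEB(\kappa_1,\omega)\Rightarrow eQG(\tau_1,\omega)$ your argument is valid but does not give the claimed constant. Sufficient decrease (obtained from the workhorse with $x=y$, not $x=\bar y$ as you wrote) yields $\varphi(y)-\varphi^*\ge \frac{1}{2\gamma}\|G^f_g(y;\gamma)\|^2$, and substituting $eGEB$ gives $\tau_1=\kappa_1^2/\gamma$, not $\tau_1=\kappa_1$. The sharper, $\gamma$-free constant in the theorem comes from a variational argument (Ekeland's principle, as in Theorem~1 for the unconstrained case), not from sufficient decrease; your route proves equivalence but not the statement as written. Similarly, for $eQG(\tau_2,\omega)\Rightarrow eGEB(\kappa_2,\omega)$ your quadratic-inequality sketch is a reasonable approach and does yield \emph{some} $\kappa_2$, but the precise constant $\kappa_2=\tau_2\gamma^2/[(2\gamma+\tau_2)(\gamma+L)]$ contains $L$ explicitly (not just through $\gamma\ge L$), whereas your derivation uses $L$ only via the workhorse and so produces a constant depending on $\gamma$ and $\tau_2$ alone. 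In short: for the two $eGEB\Leftrightarrow eQG$ implications you should either invoke Lemma~\ref{impli} as the paper does, or be prepared to reproduce the Drusvyatskiy--Lewis arguments to recover the exact constants claimed.
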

\begin{proof}
The equivalence of $eGEB(\kappa, \omega)$ and $eQG(\tau,\omega)$ has been shown in \cite{Drusvyatskiy2016error} recently; see Lemma \ref{impli}. Applying the Cauchy-Schwartz inequality to the left-hand side term of eRSC gives the eGEB property. It remains to prove that $eGEB$ implies $eRSC$.
 By using Lemma \ref{beck} with $x=[y]_\mathcal{X}^+$ and $\bar{x}=y$, we get
\begin{equation}
 \langle  G^f_g(y;\gamma),y-[y]_\mathcal{X}^+\rangle \geq \frac{1}{2\gamma}\|G^f_g(y;\gamma)\|^2 +\varphi(p^f_g(y;\gamma)) -\varphi([y]_\mathcal{X}^+).
\end{equation}
Noticing that $\varphi(p^f_g(y;\gamma)) - \varphi([y]_\mathcal{X}^+)=\varphi(p^f_g(y;\gamma)) - \varphi^*\geq 0$ and applying the eGEB property, we obtain
\begin{equation}
\langle  G^f_g(y;\gamma),y-[y]_\mathcal{X}^+\rangle\geq \frac{\kappa_2^2}{2\gamma}d^2(y,\mathcal{X}), \quad \forall y\in [\varphi\leq \varphi^*+\omega],
\end{equation}
which completes the proof.
\end{proof}

\section{Convergence analysis}
In this section, we discuss asymptotically linear convergence of the proximal gradient method, based on the equivalence notions defined before.

The proximal gradient method, also known as the forward-backward splitting method, is fundamental for the following structured optimization problem
$$\Min_{x} \varphi(x):= f(x)+g(x),$$
where $f$ is a convex function with the  gradient-Lipschitz-continuous property and $g$ is a closed convex function. It can be stated as
$$x_{k+1}=x_k-\frac{1}{\gamma} G^f_g(x_k;\gamma),$$
where the constant $\gamma>0$ is appropriately chosen. Based on the eGEB property, the authors of \cite{Drusvyatskiy2016error} observed that an asymptotically $Q$-linear convergence in function values can be assured for this method, and moreover, if the iterates $x_k$ have some limit point $x^*$, then $x_k$ asymptotically converge $R$-linearly. Motivated by the arguments in \cite{zhang2015restricted}, we have established below an asymptotically $Q$-linear convergence in the distance values  $d(x_{k},\mathcal{X})$, and proved that $x_k$ themselves asymptotically converge $R$-linearly to a limit point $x^*$ under a compactness assumption on the minimizer set $\mathcal{X}$. The following result is complimentary to that of \cite{Drusvyatskiy2016error} and might be of interest in itself.

\begin{theorem}\label{mainadd1}
 Let $f:\RR^n\rightarrow \RR$ be a convex differentiable function whose gradient is Lipschitz-continuous with positive scalar $L$ and $g:\RR^n\rightarrow \overline{\RR}$ be a closed convex function. Denote $\mathcal{X}=\arg\min_{x} \varphi(x):= f(x)+g(x)$ and $\varphi^*=\min_{x} \varphi(x)$ and assume that $\mathcal{X}$ is  nonempty. Let $\gamma\geq L$ be a fixed constant. Suppose that $\Min_{x} \varphi(x)$ satisfies the $eQG(\tau,\omega)$ property (equivalently, the $eGEB(\kappa, \omega)$ property). Then, the iterates $x_k$ generated by the proximal gradient method asymptotically converge $Q$-linearly, that is there exists an index $m$ such that the inequality
$$ d(x_{k+1},\mathcal{X})\leq \sqrt{\frac{\gamma}{\gamma + \tau}}d(x_k,\mathcal{X}) $$
holds for all $k\geq m$. Moreover, if $\mathcal{X}$ is compact, then the iterates $x_k$ asymptotically converge $R$-linearly to some limit point $x^*\in\mathcal{X}$ in the sense that
$$ d^2(x_{k+m},x^*)\leq C\cdot (1-\frac{\kappa}{2\gamma})^k$$
holds for all $k\geq 1$, where $m>\frac{L}{2\omega}d^2(x_0,\mathcal{X})$ and $C=\frac{2(\varphi(x_m)-\varphi^*)}{\gamma}(\sum_{i=0}^\infty (1-\frac{\kappa}{2\gamma})^{\frac{i}{2}})^2$.

\end{theorem}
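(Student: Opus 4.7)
The plan is to combine the two standard prox-inequalities of the proximal gradient method with the eQG and eGEB growth conditions, which Theorem \ref{main2} has already shown to be equivalent. Since $\gamma \geq L$, Lemma \ref{beck} supplies both the per-iteration descent estimate
$$\varphi(x_k)-\varphi(x_{k+1}) \geq \frac{1}{2\gamma}\|G^f_g(x_k;\gamma)\|^2$$
and, via the three-point identity, the prox-inequality
$$\varphi(x_{k+1})+\frac{\gamma}{2}\|x_{k+1}-x\|^2 \leq \varphi(x)+\frac{\gamma}{2}\|x_k-x\|^2\quad\forall x.$$
Specialising the latter to $x=[x_0]_\mathcal{X}^+$, telescoping, and using the monotone decrease of $\varphi(x_k)$ yields the classical $O(1/k)$ bound on $\varphi(x_k)-\varphi^*$, so for $m>\frac{L}{2\omega}d^2(x_0,\mathcal{X})$ all iterates $x_k$, $k\geq m$, sit inside the sublevel set $\{\varphi\leq \varphi^*+\omega\}$. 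From that index on, the eQG and eGEB inequalities are available at every iterate.

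Claim 1 will then fall out at once. I would apply the three-point inequality again with $x=[x_k]_\mathcal{X}^+$, reducing its right-hand side to $\varphi^*+\frac{\gamma}{2}d^2(x_k,\mathcal{X})$; on the left I bound $\|x_{k+1}-[x_k]_\mathcal{X}^+\|\geq d(x_{k+1},\mathcal{X})$ and replace $\varphi(x_{k+1})-\varphi^*$ by $\frac{\tau}{2}d^2(x_{k+1},\mathcal{X})$ through eQG. A one-line rearrangement gives $d^2(x_{k+1},\mathcal{X})\leq \frac{\gamma}{\gamma+\tau}d^2(x_k,\mathcal{X})$ for all $k\geq m$.

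For Claim 2 I would first extract a Q-linear decay of the functional residual $A_k:=\varphi(x_k)-\varphi^*$ from eGEB. Using Lemma \ref{beck} with $\bar{x}=x_k$ and $x=[x_k]_\mathcal{X}^+$, then Cauchy--Schwarz together with $d(x_k,\mathcal{X})\leq \|G^f_g(x_k;\gamma)\|/\kappa$, yields $A_{k+1}\leq \left(\frac{1}{\kappa}-\frac{1}{2\gamma}\right)\|G^f_g(x_k;\gamma)\|^2$; feeding in the descent inequality $\|G^f_g(x_k;\gamma)\|^2\leq 2\gamma(A_k-A_{k+1})$ and solving for $A_{k+1}$ gives the clean recursion $A_{k+1}\leq \bigl(1-\frac{\kappa}{2\gamma}\bigr)A_k$, hence $A_{k+m}\leq \bigl(1-\frac{\kappa}{2\gamma}\bigr)^k A_m$. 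The descent inequality also gives $\|x_{k+1}-x_k\|\leq \sqrt{2A_k/\gamma}$; substituting the geometric decay of $A_k$ makes the right-hand side a geometric sequence of ratio $\bigl(1-\frac{\kappa}{2\gamma}\bigr)^{1/2}$, so $\{x_k\}$ is Cauchy. Compactness of $\mathcal{X}$ keeps $\{x_k\}$ bounded, and since $d(x_k,\mathcal{X})\to 0$ the limit $x^*$ must lie in the closed set $\mathcal{X}$. Squaring the geometric-tail bound for $\|x_{k+m}-x^*\|$ then produces exactly the constant $C=\frac{2(\varphi(x_m)-\varphi^*)}{\gamma}\bigl(\sum_{i\geq 0}(1-\frac{\kappa}{2\gamma})^{i/2}\bigr)^2$ announced in the statement.

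I expect the only real obstacle to be careful bookkeeping rather than any conceptual difficulty: tracking the index shift by $m$, making sure the correct half-powers $\bigl(1-\frac{\kappa}{2\gamma}\bigr)^{1/2}$ survive the passage from iterate differences to the squared distance, and verifying that the $O(1/k)$ constant arising from the three-point inequality is indeed compatible with the hypothesis $m>\frac{L}{2\omega}d^2(x_0,\mathcal{X})$. Conceptually the whole argument is a pairing of the two standard prox-inequalities with the eQG/eGEB equivalence established in Theorem \ref{main2}.
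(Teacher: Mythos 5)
Your proposal is correct and follows essentially the same route as the paper: the standard sublinear estimate fixes the index $m$ after which all iterates lie in the sublevel set $[\varphi\leq\varphi^*+\omega]$; Lemma \ref{beck} evaluated at the projection $[x_k]_{\mathcal{X}}^+$ (your ``three-point inequality'' is exactly the paper's expansion of $\|x_{k+1}-[x_k]_{\mathcal{X}}^+\|^2$ combined with Lemma \ref{beck}) together with eQG gives $(\gamma+\tau)\,d^2(x_{k+1},\mathcal{X})\leq\gamma\,d^2(x_k,\mathcal{X})$; and the recursion $\varphi(x_{k+1})-\varphi^*\leq(1-\frac{\kappa}{2\gamma})(\varphi(x_k)-\varphi^*)$ obtained from eGEB plus the descent inequality yields the geometric summability of the step lengths and hence the Cauchy property and the stated constant $C$. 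The one place you genuinely diverge is the identification of the limit: the paper extracts, via compactness of $\mathcal{X}$, a convergent subsequence of the projections $x_k'$ and runs a uniqueness-of-subsequential-limits argument, whereas you simply note that the Cauchy sequence converges to some $x^*$ and that $d(x^*,\mathcal{X})=\lim_k d(x_k,\mathcal{X})=0$ by continuity of the distance function, so $x^*$ lies in the closed set $\mathcal{X}$. Your argument is shorter and, notably, does not actually use compactness of $\mathcal{X}$ at all (a Cauchy sequence is automatically bounded), which shows the paper's compactness hypothesis is dispensable here --- contrary to what the paper's own remark suggests. The only bookkeeping point to watch in your function-value recursion is the sign of $\frac{1}{\kappa}-\frac{1}{2\gamma}$ before multiplying by $\|G^f_g(x_k;\gamma)\|^2\leq 2\gamma(A_k-A_{k+1})$; if it is negative the bound forces $A_{k+1}=0$ and the conclusion holds trivially, so nothing breaks.
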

The proof idea below is partially inspired by \cite{Drusvyatskiy2016error,zhang2015restricted}.
\begin{proof}
Let $m= \lceil \frac{L}{2\omega}d^2(x_0,\mathcal{X})\rceil$ where $\lceil x \rceil$ denotes the smallest integer larger than $x$. By using the standard sublinear estimate \cite{beck2009fast}
$$\varphi(x_k)-\varphi^*\leq \frac{L\cdot d^2(x_0,\mathcal{X})}{2k},$$
we deduce that $\varphi(x_k)-\varphi^*\leq \omega$ holds for all $k\geq m$. Denote the projection point of $x$  onto $\mathcal{X}$ by $x^{\prime}$ . By invoking Lemma \ref{beck} with $x=x_k^{\prime}$ and $\bar{x}=x_k$, we have that
\begin{equation}\label{bound1}
 \langle  G^f_g(x_k;\gamma),x_k-x_k^{\prime}\rangle \geq \frac{1}{2\gamma}\|G^f_g(x_k;\gamma)\|^2 +\varphi(x_{k+1}) -\varphi^* .
\end{equation}
Now, together with the $eQG(\tau,\omega)$ property, we derive that for all $k\geq m$
\begin{subequations}
\begin{align}
d^2(x_{k+1},\mathcal{X})= &\|x_{k+1}-x_{k+1}^{\prime}\|^2\leq \|x_{k+1}-x_k^{\prime}\|^2 \\
=& \|x_k-x_k^{\prime} -\frac{1}{\gamma}G^f_g(x_k;\gamma)\|^2 \\
=& \|x_k-x_k^{\prime}\|^2 -\frac{2}{\gamma}\langle  G^f_g(x_k;\gamma),x_k-x_k^{\prime}\rangle + \frac{1}{\gamma^2}\|G^f_g(x_k;\gamma)\|^2\\
\leq & \|x_k-x_k^{\prime}\|^2-\frac{2}{\gamma}(\varphi(x_{k+1}) -\varphi^* ) \\
\leq & d^2(x_k,\mathcal{X})-\frac{\tau}{\gamma} d^2(x_{k+1},\mathcal{X}),
\end{align}
\end{subequations}
which yields the asymptotically  $Q$-linear convergence of $x_k$.

To prove the convergence of $\{x_k\}$ themselves, we first consider the sequence $\{x^{\prime}_k\}\subseteq \mathcal{X}$, which must have a subsequence, denoted by $\{x^{\prime}_{k_i}\}$, converging to some point $x^*\in\mathcal{X}$ due to the compactness assumption on $\mathcal{X}$. We claim that $x^{\prime}_k\rightarrow x^*$ as $k\rightarrow \infty$. Otherwise, there must exist another subsequence of $\{x_k\}$, denoted by $\{x^{\prime}_{k_j}\}$, converging to a different point $\hat{x}\in\mathcal{X}$. Notice that
$$d(x^*,\hat{x})\leq d(x^*,x^{\prime}_{k_i})+d(x^{\prime}_{k_i}, x^{\prime}_{k_j})+d(x^{\prime}_{k_j},\hat{x})$$
and $d(x^{\prime}_{k_i}, x^{\prime}_{k_j})\leq d(x_{k_i}, x_{k_j}),$  where the latter follows from the nonexpansive property of projection operator. We get that
\begin{equation}\label{dist}
 d(x^*,\hat{x})\leq d(x^*,x^{\prime}_{k_i})+d(x^{\prime}_{k_j},\hat{x})+d(x_{k_i}, x_{k_j}).
\end{equation}
Denote $\ell_1=\min\{k_i, k_j\}$ and $\ell_2=\max\{k_i,k_j\}$; then
$$d(x_{k_i}, x_{k_j})\leq \sum_{i=\ell_1}^{\ell_2-1}d(x_i,x_{i+1})\leq \sum_{i=\ell_1}^{\infty}d(x_i,x_{i+1}).$$
By invoking Lemma \ref{beck} with $x=\bar{x}=x_i$, we have that
\begin{equation}\label{bound2}
\varphi(x_i)-\varphi(x_{i+1})\geq \frac{1}{2\gamma}\|G^f_g(x_i;\gamma)\|^2=\frac{\gamma}{2}\|x_i-x_{i+1}\|^2.
\end{equation}
On the other hand, inequality \eqref{bound1} implies
\begin{equation}\label{bound3}
\varphi(x_{i+1}) -\varphi^*\leq\| G^f_g(x_i;\gamma)\|^2\left( \frac{\|x_i-x^{\prime}_i\|}{\| G^f_g(x_i;\gamma)\|}-\frac{1}{2\gamma}\right).
\end{equation}
Applying the $eGEB(\kappa, \omega)$ property to \eqref{bound3} and combining with \eqref{bound2}, we get that
\begin{equation}\label{conv1}
\varphi(x_{i+1}) -\varphi^*\leq (1-\frac{\kappa}{2\gamma})(\varphi(x_i) -\varphi^*),~~i\geq m .
\end{equation}
Let $\ell_1\geq m$ and $i\geq m$; then
\begin{subequations}
\begin{align}
d^2(x_i,x_{i+1})= & \|x_i-x_{i+1}\|^2 \leq \frac{2}{\gamma}(\varphi(x_i)-\varphi(x_{i+1}))\leq  \frac{2}{\gamma}(\varphi(x_i)-\varphi^*)\\
\leq &  \frac{2}{\gamma}(1-\frac{\kappa}{2\gamma})^{i-m}(\varphi(x_m)-\varphi^*)
\end{align}
\end{subequations}
and hence
\begin{subequations}\label{bound4}
\begin{align}
d(x_{k_i},x_{k_j})\leq &\sum_{i=\ell_1}^{\infty}d(x_i,x_{i+1})\leq \sqrt{\frac{2(\varphi(x_m)-\varphi^*)}{\gamma}}\sum_{i=\ell_1}^\infty (1-\frac{\kappa}{2\gamma})^{\frac{i-m}{2}}\\
= &  D\cdot (1-\frac{\kappa}{2\gamma})^{\frac{\ell_1-m}{2}}\rightarrow 0, ~~as~~\ell_1\rightarrow \infty ,
\end{align}
\end{subequations}
where $D=\sqrt{\frac{2(\varphi(x_m)-\varphi^*)}{\gamma}}\sum_{i=0}^\infty (1-\frac{\kappa}{2\gamma})^{\frac{i}{2}}<\infty$. Together with the fact that $x^{\prime}_{k_i}\rightarrow x^*$ as $k_i\rightarrow \infty$ and $x^{\prime}_{k_j}\rightarrow \hat{x}$ as $k_j\rightarrow \infty$, we immediately get $d(x^*,\hat{x})=0$ by using \eqref{dist} with  $k_i\rightarrow \infty$ and $k_j\rightarrow \infty$, which contradicts $x^*\neq \hat{x}$. Therefore,  $x^{\prime}_k\rightarrow x^*$ as $k\rightarrow \infty$ indeed holds. Finally, in light of the asymptotically $Q$-linear convergence of $x_k$, we have that
$$d(x_k,x^*)\leq d(x_k,x^{\prime}_k)+ d(x^{\prime}_k,x^*)=d(x_k, \mathcal{X})+ d(x^{\prime}_k,x^*) \rightarrow 0, ~~as~~k\rightarrow\infty,$$
which implies that the iterates $x_k$ converge to $x^*\in \mathcal{X}$. The asymptotically $R$-linear convergence of $\{x_k\}$ follows by setting $k_i=\ell_1=k+m$ and letting $k_j\rightarrow +\infty$ in \eqref{bound4}. This completes the proof.
\end{proof}

\begin{remark}
Although \eqref{bound4} implies that $\{x_k\}$ is a Cauchy sequence and hence converges, it can not ensure its convergence to a point belonging to $\mathcal{X}$ without the compactness assumption on $\mathcal{X}$.
\end{remark}

\section{A case study: composition minimization with constraints}
The authors of \cite{zhou2015unified} presented a unified framework for establishing error bounds for a class of structured convex optimization problems. By the equivalence between the error bound condition and quadratic growth, the authors of \cite{Drusvyatskiy2016error} streamlined and illuminated the arguments in \cite{zhou2015unified} and also extended their results to a wider setting. Here, in a transparent way we derive a strengthened property of the RSC type for the following constrained convex program
\begin{equation}\label{cp}
\Min f(x),\quad\st~x\in Q,
\end{equation}
where $Q$ is a polyhedral set in the n-dimensional Euclidean space, and $f$ is the composition of an
affine mapping with a strongly convex differentiable function. We assume that $f$ and  $Q$ are of the special form:
 $$f(x)=g(Ex), ~~~Q=\{x\in \RR^n| Ax\leq b\},$$
where $E$ is some $m\times n$ matrix, $A$ is some $k\times n$ matrix, $b\in\RR^k$ is some vector, and $g$ is strongly convex and gradient-Lipschitz-continuous with positive scalars $\mu$ and $L$ respectively.

The convex program \eqref{cp} has been extensively studied for its wide applications in data networks and machine learning.

\begin{theorem}
Under the setting of Definition \ref{modified} and letting $\gamma\geq L\|EE^T\|$, we have that the constrained convex program \eqref{cp} obeys the strengthened  property of the RSC type:
\begin{equation}\label{smRSC}
 \langle G^f_Q(y;\gamma), y-[y]_\mathcal{X}^+\rangle\geq \frac{1}{2\gamma}\|G^f_Q(y;\gamma)\|^2+ C_1 \cdot d^2(y,\mathcal{X}), ~~\forall y\in Q,
\end{equation}
and the modified quadratic growth property:
\begin{equation}
 f(y)\geq f^*+ C_2\cdot d^2(y,\mathcal{X}), ~~\forall y\in Q,
\end{equation}
where $C_i, i=1,2$ are positive constant depending on matrices $E$ and $A$ and the positive scalar $\mu$.
\end{theorem}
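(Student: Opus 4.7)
The plan is to exploit the composite structure $f = g \circ E$ together with Hoffman's classical error bound for polyhedral systems. First I would identify $\mathcal{X}$: strict convexity of $g$ (implied by its strong convexity) forces $Ex_1^* = Ex_2^*$ for every pair $x_1^*, x_2^* \in \mathcal{X}$, since otherwise $(x_1^* + x_2^*)/2 \in Q$ would yield a strictly smaller value of $f = g\circ E$. Hence there is a unique $t^* \in \RR^m$ such that $\mathcal{X} = \{x : Ax \leq b,\, Ex = t^*\}$ is the solution set of a polyhedral system, and Hoffman's lemma supplies a constant $\theta > 0$ depending only on $A$ and $E$ with $d(y, \mathcal{X}) \leq \theta\, \|Ey - t^*\|$ for all $y \in Q$.

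The modified quadratic growth is the easy half. Strong convexity of $g$ at $t^*$ gives $f(y) - f^* = g(Ey) - g(t^*) \geq \langle \nabla g(t^*), Ey - t^*\rangle + (\mu/2)\|Ey - t^*\|^2$, while the first-order optimality at any fixed $x^* \in \mathcal{X}$ yields $\langle E^T\nabla g(t^*), y - x^*\rangle \geq 0$ for $y \in Q$, i.e., $\langle \nabla g(t^*), Ey - t^*\rangle \geq 0$. Dropping this nonnegative linear term and applying the Hoffman bound produces the quadratic growth conclusion with $C_2 = \mu/(2\theta^2)$.

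For the strengthened RSC inequality \eqref{smRSC}, my plan is to upgrade the standard Beck--Teboulle descent lemma by inserting the ``hidden'' strong convexity of $f=g\circ E$ along the seminorm $\|E\cdot\|$. Combining the descent estimate $f(x_Q(\bar x;\gamma)) \leq f(\bar x) + \langle \nabla f(\bar x), x_Q(\bar x;\gamma) - \bar x\rangle + (\gamma/2)\|x_Q(\bar x;\gamma) - \bar x\|^2$ (valid for $\gamma \geq L\|EE^T\|$) with the seminorm strong convexity lower bound $f(\bar x) \leq f(x) - \langle \nabla f(\bar x), x - \bar x\rangle - (\mu/2)\|E(x - \bar x)\|^2$, and then invoking the constrained first-order optimality $\langle \nabla f(\bar x) - G^f_Q(\bar x;\gamma), x - x_Q(\bar x;\gamma)\rangle \geq 0$ for $x \in Q$, a short rearrangement produces
$$\langle G^f_Q(\bar x;\gamma),\bar x - x\rangle \geq f(x_Q(\bar x;\gamma)) - f(x) + \frac{1}{2\gamma}\|G^f_Q(\bar x;\gamma)\|^2 + \frac{\mu}{2}\|E(x - \bar x)\|^2.$$
Setting $\bar x = y$, $x = [y]_\mathcal{X}^+$, dropping the nonnegative term $f(x_Q(y;\gamma)) - f^*$, and using $\|E(y - [y]_\mathcal{X}^+)\| = \|Ey - t^*\|$ together with the Hoffman bound deliver \eqref{smRSC} with $C_1 = \mu/(2\theta^2)$.

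The main obstacle is the enhanced descent inequality above, which must simultaneously blend an upper estimate derived from the full gradient-Lipschitz constant of $f$ with a strong-convexity lower estimate that is degenerate on the kernel of $E$, while respecting the constrained first-order optimality of $x_Q(\bar x;\gamma)$; in particular, it is crucial that the cross terms assemble to leave the clean coefficient $1/(2\gamma)$ in front of $\|G^f_Q(\bar x;\gamma)\|^2$ rather than something weaker. Once this inequality is in hand, both halves of the theorem follow quickly from the uniqueness of $Ex^*$ on $\mathcal{X}$ and Hoffman's lemma, and the resulting constants depend only on $E$, $A$, and $\mu$ as claimed.
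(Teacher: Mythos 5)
Your proposal is correct and follows essentially the same route as the paper: the ``enhanced descent inequality'' you describe is exactly the paper's Lemma \ref{nest2} (proved the same way, from the $\gamma$-descent estimate, the seminorm strong convexity of $g\circ E$, and the optimality of $x_Q(\bar x;\gamma)$ over $Q$), and both halves then follow from the uniqueness of $Ex^*$ on $\mathcal{X}$ and Hoffman's bound. The only minor deviation is the quadratic-growth half, where you argue directly via strong convexity of $g$ at $t^*$ plus first-order optimality rather than reusing Lemma \ref{nest2} at $\bar x=[y]_\mathcal{X}^+$; this is precisely the alternative the paper itself records in a remark, so the two are interchangeable.
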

\begin{proof}
By using \eqref{keyineq} in Lemma \ref{nest2} with $x=[y]_\mathcal{X}^+, \bar{x}=y$ and noticing that $f(x_Q(y;\gamma))\geq f([y]_\mathcal{X}^+)=f^*$, we get
\begin{equation}\label{Ersc}
 \langle G^f_Q(y;\gamma), y-[y]_\mathcal{X}^+\rangle\geq \frac{1}{2\gamma}\|G^f_Q(y;\gamma)\|^2+ \frac{\mu}{2} \|Ey-E[y]_\mathcal{X}^+\|^2.
\end{equation}
Since $g$ is strongly convex, there must exist a unique vector $t^*$ such that
$ Ex=t^*, \forall x\in \mathcal{X}$;  please refer to \cite{luo1992linear,wang2013iteration}.
Thus, $$\mathcal{X}=\{x| Ex=t^*\}\bigcap \{x|Ax\leq b\}.$$
Due to the result of Hoffman's error bound in Lemma \ref{hoffman}, there must exist a constant $\theta>0$ depending on matrices $E$ and $A$ such that for any $y\in Q=\{x|Ax\leq b\}$ it holds
$$\|Ey-E[y]_\mathcal{X}^+\|^2=\|Ey-t^*\|^2\geq \theta \| y-[y]_\mathcal{X}^+\|^2=\theta\cdot d^2(y,\mathcal{X}).$$
Thus, the strengthened  property of the RSC type follows from this and the inequality \eqref{Ersc}.

By Lemma \ref{opt}, for any $x^*\in\mathcal{X}$ we have $G^f_Q(x^*;\gamma)=0$ and hence $x^*=x_Q(x^*;\gamma)$. Therefore, using \eqref{keyineq} Lemma \ref{nest2} with $\bar{x}=[y]_\mathcal{X}^+$ and the fact of $\|Ey-E[y]_\mathcal{X}^+\|^2\geq \theta\cdot d^2(y,\mathcal{X})$, we get the modified quadratic growth property. This completes the proof.
\end{proof}

\begin{remark}
By Lemma A.8 in \cite{wang2013iteration}, we get
$$\|G^f_Q(y;\gamma)\|=\|\gamma (y-[y-\frac{1}{\gamma}\nabla f(y)]_\mathcal{X}^+)\|\leq \gamma \max(1,\gamma^{-1})\|y-[y-\nabla f(y)]_\mathcal{X}^+\|.$$
Thus, using this and applying the Cauchy-Schwartz inequality to \eqref{smRSC}, we get
$$\gamma \max(1,\gamma^{-1})\|y-[y-\nabla f(y)]_\mathcal{X}^+\|\geq C_1\cdot d(y,\mathcal{X}).$$
Or, equivalently there exists a constant $C_3>0$ such that
$$ C_3 \cdot d(y,\mathcal{X})\leq \|y-[y-\nabla f(y)]_\mathcal{X}^+\|.$$
On the contrast, the authors of \cite{wang2013iteration} derived the following error bound
$$  d(y,\mathcal{X})\leq \kappa_3\|y-[y-\nabla f(y)]_\mathcal{X}^+\|, ~~\forall y\in Q~~\textrm{and} ~~f(y)-f^*\leq M $$
for $f(x)=g(Ex)+q^Tx$. Although this bound enables the incorporation of the additional linear term, the constant $\kappa_3$ depends on the positive parameter $M$ and hence on the variable $y$. To avoid such dependence, the authors of \cite{beck2015linearly} derived the quadratic growth property
$$ f(y)\geq f^*+ \kappa_4\cdot d^2(y,\mathcal{X}), ~~\forall y\in Q$$
for $f(x)=g(Ex)+q^Tx$ by assuming that $Q$ is compact, where $\kappa_4>0$ is a constant. Here,
 we can drop the compactness assumption by neglecting the linear term $q^Tx$.
\end{remark}

\begin{remark}
There is an alternative way  in \cite{necoara2015linear} to prove the modified QG  property. Indeed,
By the strong convexity of $g$ and the fact of $\|Ey-E[y]_\mathcal{X}^+\|^2\geq \theta\cdot d^2(y,\mathcal{X})$, we derive for $\forall y\in Q$ that
$$f(y)-f^* = f(y)-f([y]_\mathcal{X}^+)=g(Ey) -g(E[y]_\mathcal{X}^+)  \geq \frac{\mu}{2}\|Ex-E[x]_\mathcal{X}^+\|^2\geq \frac{\mu\theta}{2}\cdot d^2(y,\mathcal{X}).$$
Thus, by Theorem 2 we immediately get the mRSC property
$$\langle G^f_Q(y;\gamma), y-[y]_\mathcal{X}^+\rangle\geq C_4\cdot d^2(y,\mathcal{X}),$$
where $C_4$ is some positive constant.
\end{remark}

\section*{Acknowledgements}
 We would like to thank anonymous reviewers for their valuable comments, with which great improvements have been made in this manuscript. The work is supported by the National Science Foundation of China (No.11501569 and No.61571008).

\section{Appendix}
\begin{lemma}[Ekeland's variational principle, \cite{Artacho2008char}]
Let $(X, d(\cdot,\cdot))$ be a complete metric space and let $f:\RR^n\rightarrow \RR\bigcup \{+\infty\}$ be a lower semicontinuous function bounded from below, where $d(x,y)$ stands for the Euclidean distance. Suppose that for some $\epsilon>0$ and $z\in X$, $f(z)<\inf f +\epsilon$. Then for any $\lambda>0$ there exist $y\in X$ such that $d(z,y)\leq \lambda$ and
$$f(x)+\frac{\epsilon}{\lambda}d(x,y)\geq f(y), ~\forall x\in X.$$
\end{lemma}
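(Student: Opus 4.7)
The plan is to chain together three standard ingredients: a strong-convexity-enhanced descent inequality for the gradient mapping (Lemma \ref{nest2}), the uniqueness of $Ex$ over the optimal set, and Hoffman's error bound for polyhedra (Lemma \ref{hoffman}). First I would invoke the descent-type inequality that exploits the $\mu$-strong convexity of $g$: for all $x\in Q$,
\begin{equation*}
\langle G^f_Q(\bar x;\gamma), \bar x-x\rangle \;\geq\; \tfrac{1}{2\gamma}\|G^f_Q(\bar x;\gamma)\|^2 + f(x_Q(\bar x;\gamma)) - f(x) + \tfrac{\mu}{2}\|E\bar x - Ex\|^2,
\end{equation*}
valid when $\gamma\geq L\|EE^T\|$. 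Plugging in $\bar x = y$ and $x=[y]_\mathcal{X}^+$, and using $x_Q(y;\gamma)\in Q$ to drop the nonnegative term $f(x_Q(y;\gamma))-f^*$, leaves
\begin{equation*}
\langle G^f_Q(y;\gamma), y-[y]_\mathcal{X}^+\rangle \geq \tfrac{1}{2\gamma}\|G^f_Q(y;\gamma)\|^2 + \tfrac{\mu}{2}\|Ey - E[y]_\mathcal{X}^+\|^2.
\end{equation*}

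Next I would show that all minimizers share a common $E$-image. If $x_1,x_2\in\mathcal{X}$, then strong convexity of $g$ forces $Ex_1=Ex_2$, for otherwise the midpoint would give a strictly smaller $f$-value. Hence there is a unique $t^*\in\RR^m$ with $\mathcal{X}=\{x:Ex=t^*\}\cap\{x:Ax\leq b\}$. Applied to this polyhedral description, Hoffman's bound (Lemma \ref{hoffman}) yields a constant $\theta>0$ depending only on $E$ and $A$ such that for every $y\in Q$ (which already satisfies $Ay\leq b$), the only violated constraints are the equality ones, giving
\begin{equation*}
d(y,\mathcal{X})^2 \leq \tfrac{1}{\theta}\|Ey-t^*\|^2 = \tfrac{1}{\theta}\|Ey - E[y]_\mathcal{X}^+\|^2.
\end{equation*}
Substituting this into the previous display produces the strengthened RSC-type inequality with $C_1=\mu\theta/2$.

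For the modified quadratic growth bound I would re-use the same descent inequality, but now with $\bar x=[y]_\mathcal{X}^+$. Because $[y]_\mathcal{X}^+\in\mathcal{X}$, the optimality characterization (Lemma \ref{opt}) gives $G^f_Q([y]_\mathcal{X}^+;\gamma)=0$ and $x_Q([y]_\mathcal{X}^+;\gamma)=[y]_\mathcal{X}^+$, so the inequality collapses to
\begin{equation*}
0 \geq f^* - f(x) + \tfrac{\mu}{2}\|E[y]_\mathcal{X}^+ - Ex\|^2 \quad\text{for all } x\in Q.
\end{equation*}
Setting $x=y$ and invoking the Hoffman bound once more delivers $f(y)-f^*\geq \tfrac{\mu\theta}{2}d^2(y,\mathcal{X})$, so $C_2=\mu\theta/2$ works.

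The main obstacle is producing the exact form of Lemma \ref{nest2}: one needs the quadratic-model inequality for the gradient mapping, and the improvement $\tfrac{\mu}{2}\|E\bar x-Ex\|^2$ only appears if one uses the strong convexity of $g$ on the image space together with the compatibility condition $\gamma\geq L\|EE^T\|$ that controls the upper quadratic model. Once that lemma is in hand, the rest is the clean polyhedral-error-bound substitution sketched above, so the real work lies in the descent inequality and in verifying that Hoffman's constant $\theta$ can be extracted from only the equality block $Ex=t^*$ when $y$ is already feasible for $Ax\leq b$.
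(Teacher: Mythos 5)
Your proposal does not address the statement at all. The statement to be proved is Ekeland's variational principle: for a lower semicontinuous function $f$ bounded below on a complete metric space, if $f(z) < \inf f + \epsilon$ then for any $\lambda > 0$ there is a point $y$ with $d(z,y)\leq\lambda$ at which the perturbed function $x \mapsto f(x) + \frac{\epsilon}{\lambda}d(x,y)$ attains its minimum. What you have written instead is a proof of the paper's Theorem~4 (the strengthened RSC-type inequality and modified quadratic growth for $f(x)=g(Ex)$ over a polyhedron), assembled from Lemma~\ref{nest2}, the uniqueness of $Ex$ on the optimal set, and Hoffman's error bound. None of that machinery --- gradient mappings, strong convexity of $g$, polyhedral sets --- appears in, or is relevant to, Ekeland's principle, which is a purely metric-space statement with no convexity or smoothness hypotheses.

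For the record, the paper offers no proof of this lemma either: it is quoted as a known classical result with a citation, and is used as a black box in the proof of Theorem~1 (the equivalence $GEB \Rightarrow QG$). A genuine proof would proceed along the standard lines: define a partial order $x \preceq y$ iff $f(x) + \frac{\epsilon}{\lambda} d(x,y) \leq f(y)$, construct a decreasing sequence starting from $z$ whose $f$-values approach the infimum over the successive lower sections, and use completeness to extract a limit $y$ that is minimal for the order; minimality is exactly the asserted inequality, and the bound $d(z,y)\leq\lambda$ follows from $f(z) < \inf f + \epsilon$ together with $y \preceq z$. If your intent was to prove Theorem~4, your outline is essentially the paper's own argument for that theorem and is sound, but it must be submitted against the correct statement.
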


\begin{lemma}[Corollary 3.6, \cite{Drusvyatskiy2016error}]\label{impli}
Let $f$ be gradient-Lipschitz-continuous with positive scalar $L$ and $g:\RR^n\rightarrow \overline{\RR}$ be a closed convex function. Then the $eQG(\tau, \omega)$ implies $eGEB(\kappa, \omega)$ with $\kappa=\frac{\tau \gamma^2}{(2\gamma+\tau)(\gamma+L)}$. Conversely, the
$eGEB(\kappa, \omega)$ implies the $eQG(\tau, \omega)$ with any $\tau\in (0, \kappa)$.
\end{lemma}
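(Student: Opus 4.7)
For the forward direction eQG $\Rightarrow$ eGEB, the plan is to combine the three-point inequality for the proximal gradient mapping (Lemma \ref{beck} in the appendix) with eQG applied at the prox-iterate. Fix $y$ with $\varphi(y)\leq\varphi^*+\omega$ and write $p=p^f_g(y;\gamma)$, $G=G^f_g(y;\gamma)$, $\bar y=[y]_\mathcal{X}^+$. Lemma \ref{beck} at $x=\bar y$ gives the upper bound
\[
\varphi(p)-\varphi^* \leq \langle G,y-\bar y\rangle - \tfrac{1}{2\gamma}\|G\|^2 \leq \|G\|\cdot d(y,\mathcal{X}) - \tfrac{1}{2\gamma}\|G\|^2,
\]
while the sufficient-decrease form at $x=y$ yields $\varphi(p)\leq\varphi(y)\leq\varphi^*+\omega$, so eQG is admissible at $p$ and gives $\tfrac{\tau}{2}d^2(p,\mathcal{X})\leq \varphi(p)-\varphi^*$. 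Controlling $d(p,\mathcal{X})$ in terms of $d(y,\mathcal{X})$ via the nonexpansiveness of $\mathrm{prox}_{g/\gamma}$ applied to the forward step $y\mapsto y-\nabla f(y)/\gamma$ (whose Lipschitz constant is $1+L/\gamma$) closes the loop into a quadratic inequality for the ratio $\|G\|/d(y,\mathcal{X})$, whose admissible root produces $\|G\|\geq\kappa\cdot d(y,\mathcal{X})$ with the stated $\kappa=\tfrac{\tau\gamma^2}{(2\gamma+\tau)(\gamma+L)}$.

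For the reverse direction eGEB $\Rightarrow$ eQG (with $\tau\in(0,\kappa)$), the plan is a length-summation argument along the proximal gradient trajectory. Starting from $y_0=y$ in the sublevel set, iterate $y_{k+1}=p^f_g(y_k;\gamma)$ and denote $G_k=\gamma(y_k-y_{k+1})$. Sufficient decrease $\varphi(y_k)-\varphi(y_{k+1})\geq \|G_k\|^2/(2\gamma)$, combined with eGEB $\|G_k\|\geq \kappa\,d(y_k,\mathcal{X})$ and the three-point upper bound $\varphi(y_{k+1})-\varphi^*\leq \|G_k\|\,d(y_k,\mathcal{X})-\|G_k\|^2/(2\gamma)$, yields a $Q$-linear recurrence $\varphi(y_{k+1})-\varphi^*\leq(1-\rho)(\varphi(y_k)-\varphi^*)$ for an explicit rate $\rho>0$. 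The iterate gaps $\|y_{k+1}-y_k\|=\|G_k\|/\gamma\leq\sqrt{(2/\gamma)(\varphi(y_k)-\varphi(y_{k+1}))}$ then telescope through a geometric series:
\[
d(y,\mathcal{X})\leq \sum_{k\geq 0}\|y_{k+1}-y_k\| \leq C\sqrt{\varphi(y)-\varphi^*},
\]
which rearranges to the eQG inequality $\varphi(y)-\varphi^*\geq \tfrac{\tau}{2}d^2(y,\mathcal{X})$ with $\tau=2/C^2$. Optimizing the rate constants over feasible choices yields any prescribed $\tau<\kappa$.

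The main obstacle is matching the asserted constants exactly. In direction (i), the crude triangle inequality $d(y,\mathcal{X})\leq\|G\|/\gamma+d(p,\mathcal{X})$ alone already gives a valid $\kappa$ but one independent of $L$; recovering the $L$-dependent $\kappa=\tfrac{\tau\gamma^2}{(2\gamma+\tau)(\gamma+L)}$ requires the sharper estimate $d(p,\mathcal{X})\leq\|p-\bar y\|\leq(1+L/\gamma)\,d(y,\mathcal{X})$ obtained from nonexpansiveness of the prox on the forward step, and the resulting quadratic algebra must be handled with care. In direction (ii), obtaining $\tau$ arbitrarily close to $\kappa$ rather than merely a fixed fraction of $\kappa$ requires sharp control of the geometric series constant $C$, explaining the strict inequality $\tau<\kappa$ in the conclusion rather than $\tau=\kappa$; this is the delicate bookkeeping step of the argument.
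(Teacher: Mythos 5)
The paper does not actually prove this lemma: it is quoted verbatim as Corollary~3.6 of \cite{Drusvyatskiy2016error}, so there is no in-paper argument to compare against. Judged on its own merits, your proposal has the right architecture for a qualitative equivalence, but both directions have concrete problems with the constants as stated.

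In the forward direction, the estimate you single out as the key to the $L$-dependence, $d(p,\mathcal{X})\leq\|p-\bar y\|\leq(1+L/\gamma)\,d(y,\mathcal{X})$, points the wrong way: your chain is $\tfrac{\tau}{2}d^2(p,\mathcal{X})\leq\varphi(p)-\varphi^*\leq\|G\|\,d(y,\mathcal{X})-\tfrac{1}{2\gamma}\|G\|^2$, and an \emph{upper} bound on $d(p,\mathcal{X})$ only weakens the left-hand side, so it cannot produce a lower bound on $\|G\|$ (take $y$ far from $\mathcal{X}$ with $p\in\mathcal{X}$: the inequality becomes vacuous). The ingredient that actually works is the \emph{lower} bound $d(p,\mathcal{X})\geq d(y,\mathcal{X})-\|G\|/\gamma$ together with the subgradient estimate $d(0,\partial\varphi(p))\leq\|\nabla f(p)-\nabla f(y)\|+\gamma\|y-p\|\leq\tfrac{\gamma+L}{\gamma}\|G\|$ and the elementary fact that eQG forces $d(0,\partial\varphi(z))\geq\tfrac{\tau}{2}d(z,\mathcal{X})$ on the sublevel set; that is where the factor $\gamma+L$ in $\kappa$ comes from. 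Your route via the triangle inequality alone does yield a valid, $L$-free constant, which suffices for how the lemma is used in Theorem~\ref{main2}, but it does not prove the lemma with the stated $\kappa$.

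In the reverse direction the gap is more serious. The trajectory-summation argument is correct as far as it goes (and one should add that the limit of the iterates lies in $\mathcal{X}$ by lower semicontinuity of $\varphi$, since $\varphi(y_k)\downarrow\varphi^*$), but the constant it delivers is $\tau=\gamma\bigl(1-\sqrt{1-\kappa/(2\gamma)}\bigr)^2$, which is of order $\kappa^2/\gamma$ --- a fixed, typically tiny fraction of $\kappa$. No optimization of the geometric-series bookkeeping within this argument will push $\tau$ up to an arbitrary value below $\kappa$, because the loss is structural (each step converts a function-value decrease into a length via a square root). The sharp claim ``any $\tau\in(0,\kappa)$'' requires a different device, namely Ekeland's variational principle (or an equivalent decrease principle), which is precisely how the paper proves $GEB\Rightarrow QG$ in Theorem~1 and how \cite{Drusvyatskiy2016error} proves the corresponding implication; you should replace the telescoping argument by that scheme if you want the stated conclusion rather than a weaker quantitative version of it.
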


The following result describes an important property of the proximal gradient mapping.
 \begin{lemma}[\cite{beck2009fast}]\label{beck}
Let $f(x)$ be gradient-Lipschitz-continuous with positive scalar L, $g:\RR^n\rightarrow \overline{\RR}$ be a closed convex function,  $\gamma\geq L$,  and $\bar{x}\in\RR^n$. Denote $\varphi(x)=f(x)+g(x)$. Then, for any $x\in Q$ we have
\begin{equation}\label{keyineq}
\varphi(x)-\varphi(p^f_g(\bar{x};\gamma))\geq  \langle G^f_g(\bar{x};\gamma), x-\bar{x}\rangle+ \frac{1}{2\gamma}\|G^f_g(\bar{x};\gamma)\|^2.
\end{equation}
\end{lemma}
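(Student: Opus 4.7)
The plan is to combine three ingredients: a composite descent inequality that exposes the strong convexity of $g$ through the image $Ex$, the fact that strong convexity of $g$ forces $Ex$ to be constant on the minimizer set $\mathcal{X}$, and Hoffman's lemma for polyhedral systems. The constant $\gamma \geq L\|EE^T\|$ is chosen precisely so that $f(x)=g(Ex)$ behaves as a gradient-Lipschitz function in the metric governing the gradient-mapping subproblem, which is what allows the standard descent bound to be sharpened.

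First I would invoke the appendix result (Lemma \ref{nest2}), which I read as strengthening the Beck--Teboulle inequality \eqref{keyineq} by an additive term $\tfrac{\mu}{2}\|Ex-E\bar x\|^2$ whenever $f=g\circ E$ with $g$ being $\mu$-strongly convex. Applying it with $\bar x = y\in Q$ and $x = [y]_\mathcal{X}^+$, and noting $f(x_Q(y;\gamma)) \geq f^* = f([y]_\mathcal{X}^+)$, the left-hand side becomes non-positive, giving
$$\langle G^f_Q(y;\gamma),\, y-[y]_\mathcal{X}^+\rangle \;\geq\; \tfrac{1}{2\gamma}\|G^f_Q(y;\gamma)\|^2 + \tfrac{\mu}{2}\|Ey-E[y]_\mathcal{X}^+\|^2.$$
Next I would show that $Ex$ is constant on $\mathcal{X}$: if $x_1,x_2\in\mathcal{X}$ then $g(Ex_1)=g(Ex_2)$ equals the minimum of $g$ over the affine image $E(Q)$, and the strong convexity of $g$ rules out two distinct minimizers, so there is a unique $t^*\in\RR^m$ with $Ex=t^*$ for every $x\in\mathcal{X}$. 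Hence $\mathcal{X}=\{x:Ex=t^*\}\cap\{x:Ax\leq b\}$ is the solution set of a single polyhedral system, and Hoffman's lemma (Lemma \ref{hoffman}) supplies a constant $\theta>0$ depending only on $E$ and $A$ such that
$$\|Ey-t^*\|^2 \;\geq\; \theta\cdot d^2(y,\mathcal{X}) \quad\text{for every } y\in Q.$$
Substituting this into the previous display yields the strengthened RSC-type inequality with $C_1=\mu\theta/2$.

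For the modified quadratic growth bound I would reapply Lemma \ref{nest2}, this time with $\bar x=[y]_\mathcal{X}^+$ and $x=y$. Since $[y]_\mathcal{X}^+\in\mathcal{X}$ is a minimizer, Lemma \ref{opt} forces $G^f_Q([y]_\mathcal{X}^+;\gamma)=0$ and $x_Q([y]_\mathcal{X}^+;\gamma)=[y]_\mathcal{X}^+$, and the inequality collapses to
$$f(y)-f^* \;\geq\; \tfrac{\mu}{2}\|Ey-E[y]_\mathcal{X}^+\|^2 \;\geq\; \tfrac{\mu\theta}{2}\cdot d^2(y,\mathcal{X}),$$
giving $C_2=\mu\theta/2$.

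The key obstacle is the careful deployment of Hoffman's lemma. One must recognise $\mathcal{X}$ as the solution set of a single linear system built from $E$, $A$, $b$, and the data-dependent (but fixed) vector $t^*$, and then verify that the Hoffman constant $\theta$ can be taken independent of $t^*$ and of the particular $y\in Q$. Once that uniformity is in place the rest of the argument is bookkeeping: the sharpened descent inequality supplies the $\tfrac{1}{2\gamma}\|G^f_Q\|^2$ term for free, and the strong-convexity bonus $\tfrac{\mu}{2}\|Ey-E[y]_\mathcal{X}^+\|^2$ is precisely what Hoffman converts into $\tfrac{\mu\theta}{2}\,d^2(y,\mathcal{X})$ without any compactness hypothesis on $Q$.
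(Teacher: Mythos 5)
You have proved the wrong statement. The lemma under review is Lemma~\ref{beck}, the general descent inequality for the proximal gradient mapping of a composite objective $\varphi=f+g$ with $f$ merely $L$-smooth and $g$ an arbitrary closed convex function; the paper imports it from Beck and Teboulle without proof. Your proposal is instead a proof of the case-study theorem in Section~5 (the strengthened RSC-type property and modified quadratic growth for $f=g\circ E$ over a polyhedron): Lemma~\ref{nest2}, Hoffman's bound, the invariance $Ex=t^*$ on $\mathcal{X}$, and the constants $C_1=C_2=\mu\theta/2$ all belong to that argument and have no role in Lemma~\ref{beck}, whose $g$ is the nonsmooth summand of $\varphi$, not a strongly convex outer function. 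Worse, the proposal is circular if read as a proof of Lemma~\ref{beck}: Lemma~\ref{nest2}, which you invoke as your first step, is precisely the constrained/composition analogue of inequality \eqref{keyineq} and is established by the same mechanism.

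A correct proof of Lemma~\ref{beck} runs as follows. Write $p=p^f_g(\bar{x};\gamma)$ and $G=G^f_g(\bar{x};\gamma)=\gamma(\bar{x}-p)$. Optimality of $p$ in the prox subproblem gives $G-\nabla f(\bar{x})\in\partial g(p)$, hence $g(x)\geq g(p)+\langle G-\nabla f(\bar{x}),x-p\rangle$ for all $x$. Convexity of $f$ gives $f(x)\geq f(\bar{x})+\langle\nabla f(\bar{x}),x-\bar{x}\rangle$, and $\gamma\geq L$ gives the upper bound $f(p)\leq f(\bar{x})+\langle\nabla f(\bar{x}),p-\bar{x}\rangle+\frac{\gamma}{2}\|p-\bar{x}\|^2$. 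Summing the three inequalities, the $\nabla f(\bar{x})$ terms cancel and
\begin{equation*}
\varphi(x)-\varphi(p)\;\geq\;\langle G,x-p\rangle-\frac{\gamma}{2}\|p-\bar{x}\|^2\;=\;\langle G,x-\bar{x}\rangle+\frac{1}{\gamma}\|G\|^2-\frac{1}{2\gamma}\|G\|^2\;=\;\langle G,x-\bar{x}\rangle+\frac{1}{2\gamma}\|G\|^2,
\end{equation*}
which is \eqref{keyineq}. Note that no polyhedrality, no Hoffman constant, and no strong convexity enter anywhere.
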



\begin{lemma}\label{growth}
Let $f(x)=g(Ex)$ with $g$ satisfying the properties \eqref{Lip} and \eqref{SC}, and denote $\hat{L}=L\|EE^T\|$. Then, we have
$$f(y)\geq f(x)+\langle \nabla f(x), y-x\rangle +\frac{\mu}{2}\|Ey-Ex\|^2, \quad\forall x, y\in\RR^n,$$
and
$$f(y)\leq f(x)+\langle \nabla f(x), y-x\rangle +\frac{\hat{L}}{2}\|y-x\|^2, \quad\forall x, y\in\RR^n.$$
\end{lemma}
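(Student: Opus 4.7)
The plan is to reduce both inequalities to the known one-variable characterizations of strong convexity and gradient-Lipschitz-continuity applied to $g$, composed with the affine map $E$. The key preliminary observation is the chain rule $\nabla f(x) = E^{T}\nabla g(Ex)$, which will convert inner products in $\RR^{m}$ back into inner products in $\RR^{n}$ with the right gradient.

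For the lower bound, I would invoke the well-known consequence of strong convexity of $g$, namely
\[
g(v) \geq g(u) + \langle \nabla g(u), v-u\rangle + \frac{\mu}{2}\|v-u\|^{2}, \quad \forall u,v\in\RR^{m},
\]
which is equivalent to the monotonicity-style condition \eqref{SC}. Specializing $u = Ex$ and $v = Ey$ and applying the chain rule to rewrite $\langle \nabla g(Ex), E(y-x)\rangle = \langle E^{T}\nabla g(Ex), y-x\rangle = \langle \nabla f(x), y-x\rangle$ yields the stated inequality immediately, with no estimation to perform.

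For the upper bound, I would apply the standard descent lemma (which follows from \eqref{Lip}) to $g$:
\[
g(v) \leq g(u) + \langle \nabla g(u), v-u\rangle + \frac{L}{2}\|v-u\|^{2}, \quad \forall u,v\in\RR^{m}.
\]
Again specializing $u=Ex$, $v=Ey$ gives $f(y) \leq f(x) + \langle \nabla f(x), y-x\rangle + \frac{L}{2}\|E(y-x)\|^{2}$. The only remaining step, and the one that introduces $\hat{L}$, is to bound $\|E(y-x)\|^{2} = \langle (y-x), E^{T}E(y-x)\rangle \leq \|E^{T}E\|\cdot\|y-x\|^{2}$ and use the spectral-norm identity $\|E^{T}E\| = \|EE^{T}\|$. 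Plugging this in produces the factor $\frac{\hat{L}}{2} = \frac{L\|EE^{T}\|}{2}$.

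There is no real obstacle here; the lemma is a routine composition calculation. The only item worth flagging is being explicit about $\|E^{T}E\| = \|EE^{T}\|$ so that the constant matches the definition $\hat{L} = L\|EE^{T}\|$ rather than the (equally valid) form $L\|E\|^{2}$ one might write first.
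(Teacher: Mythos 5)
Your proof is correct and follows essentially the same route as the paper: both reduce the claim to the standard quadratic upper and lower bounds for $g$ evaluated at $Ex$ and $Ey$, use the chain rule $\nabla f(x)=E^{T}\nabla g(Ex)$ to rewrite the linear term, and bound $\|E(y-x)\|^{2}\leq\|EE^{T}\|\,\|y-x\|^{2}$ to obtain the constant $\hat{L}$. The only cosmetic difference is that the paper re-derives those quadratic bounds by integrating the gradient conditions \eqref{Lip} and \eqref{SC} along the segment from $x$ to $y$, whereas you quote them as known equivalent characterizations of strong convexity and gradient Lipschitz continuity.
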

\begin{proof}
 By applying integration, we derive $\forall x, y\in\RR^n$ that
\begin{subequations}
\begin{align}
&f(y)-f(x)-\langle \nabla f(x), y-x\rangle\\
=& \int_0^1 \langle \nabla f(x+\tau(y-x))-\nabla f(x), y-x\rangle d\tau\\
=&\int_0^1  \langle \nabla g(Ex+\tau\cdot E(y-x))-\nabla g(Ex), Ey-Ex\rangle d\tau.
\end{align}
\end{subequations}
Thus, by the strong convexity we get
$$f(y)-f(x)-\langle \nabla f(x), y-x\rangle \geq  \int_0^1  \mu \tau \|Ey-Ex\|^2 d\tau=\frac{\mu}{2}\|Ey-Ex\|^2,$$
and by the Lipschitz property and the Cauchy-Schwartz inequality we have
$$f(y)-f(x)-\langle \nabla f(x), y-x\rangle \leq  \int_0^1  L \tau \|Ey-Ex\|^2 d\tau\leq\frac{L\|EE^T\|}{2}\|y-x\|^2,$$
both of which complete the proof.
 \end{proof}

To deal with the gradient mapping of $f(x)=g(Ex)$, we need the following result which is motivated by Theorem 2.2.7 in \cite{nesterov2004introductory}.
 \begin{lemma}\label{nest2}
Let $f(x)=g(Ex)$ with $g$ satisfying the properties \eqref{Lip} and \eqref{SC}, and let $\gamma\geq \hat{L}(=L\|EE^T\|)$ and $\bar{x}\in\RR^n$. Then, for any $x\in Q$ we have
\begin{equation}\label{keyineq}
f(x)\geq f(x_Q(\bar{x};\gamma))+ \langle G^f_Q(\bar{x};\gamma), x-\bar{x}\rangle+ \frac{1}{2\gamma}\|G^f_Q(\bar{x};\gamma)\|^2+\frac{\mu}{2}\|Ex-E\bar{x}\|^2.
\end{equation}
\end{lemma}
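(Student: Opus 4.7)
The plan is to mimic the textbook derivation (Nesterov, Thm.~2.2.7) but use Lemma~\ref{growth} in place of the standard strongly-convex/Lipschitz bounds, so that the strong-convexity correction appears in the $E$-seminorm $\|E\cdot\|$ rather than in $\|\cdot\|$, while the smoothness correction appears in $\|\cdot\|$ with constant $\hat{L}$.

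First, I would apply the lower bound in Lemma~\ref{growth} at the pair $(\bar{x},x)$:
\[
f(x)\;\ge\;f(\bar{x})+\langle \nabla f(\bar{x}),\,x-\bar{x}\rangle+\tfrac{\mu}{2}\|Ex-E\bar{x}\|^{2}.
\]
The goal is to upgrade the linear term $\langle\nabla f(\bar{x}),x-\bar{x}\rangle$ into $f(x_{Q}(\bar{x};\gamma))-f(\bar{x})+\langle G^{f}_{Q}(\bar{x};\gamma),x-\bar{x}\rangle+\frac{1}{2\gamma}\|G^{f}_{Q}(\bar{x};\gamma)\|^{2}$. To do this, I would decompose $x-\bar{x}=(x-x_{Q}(\bar{x};\gamma))+(x_{Q}(\bar{x};\gamma)-\bar{x})$ and treat the two resulting inner products separately.

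For the first inner product, I would invoke the first-order optimality condition for $x_{Q}(\bar{x};\gamma)$ as the minimizer over $Q$ of the quadratic model. Writing $\gamma(x_{Q}(\bar{x};\gamma)-\bar{x})=-G^{f}_{Q}(\bar{x};\gamma)$, this condition yields
\[
\langle \nabla f(\bar{x})-G^{f}_{Q}(\bar{x};\gamma),\,x-x_{Q}(\bar{x};\gamma)\rangle\;\ge\;0,\qquad\forall x\in Q,
\]
and then an elementary rewrite using $\bar{x}-x_{Q}(\bar{x};\gamma)=\tfrac{1}{\gamma}G^{f}_{Q}(\bar{x};\gamma)$ gives
\[
\langle \nabla f(\bar{x}),\,x-x_{Q}(\bar{x};\gamma)\rangle\;\ge\;\langle G^{f}_{Q}(\bar{x};\gamma),\,x-\bar{x}\rangle+\tfrac{1}{\gamma}\|G^{f}_{Q}(\bar{x};\gamma)\|^{2}.
\]
For the second inner product, I would apply the upper bound in Lemma~\ref{growth} at $(\bar{x},x_{Q}(\bar{x};\gamma))$ and use $\|x_{Q}(\bar{x};\gamma)-\bar{x}\|^{2}=\tfrac{1}{\gamma^{2}}\|G^{f}_{Q}(\bar{x};\gamma)\|^{2}$ together with $\gamma\ge\hat{L}$ to get
\[
\langle\nabla f(\bar{x}),\,x_{Q}(\bar{x};\gamma)-\bar{x}\rangle\;\ge\;f(x_{Q}(\bar{x};\gamma))-f(\bar{x})-\tfrac{1}{2\gamma}\|G^{f}_{Q}(\bar{x};\gamma)\|^{2}.
\]

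Adding the two displayed estimates and substituting back into the initial strong-convexity inequality cancels $f(\bar{x})$ and combines $\tfrac{1}{\gamma}\|G^{f}_{Q}(\bar{x};\gamma)\|^{2}-\tfrac{1}{2\gamma}\|G^{f}_{Q}(\bar{x};\gamma)\|^{2}=\tfrac{1}{2\gamma}\|G^{f}_{Q}(\bar{x};\gamma)\|^{2}$, producing exactly \eqref{keyineq}. The main obstacle is purely bookkeeping: one must be careful that the strong-convexity term keeps the $E$-seminorm (since $g$ is only strongly convex in its own argument and $\nabla f(\bar{x})=E^{T}\nabla g(E\bar{x})$), while the slack $\tfrac{\hat{L}}{2\gamma^{2}}\|G^{f}_{Q}(\bar{x};\gamma)\|^{2}$ coming from the smoothness step uses the ambient norm and hence requires the enlarged constant $\hat{L}=L\|EE^{T}\|$ and the stepsize condition $\gamma\ge\hat{L}$ to be absorbed into $\tfrac{1}{2\gamma}\|G^{f}_{Q}(\bar{x};\gamma)\|^{2}$.
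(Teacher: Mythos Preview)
Your proposal is correct and follows essentially the same approach as the paper: both start from the strong-convexity lower bound in Lemma~\ref{growth}, split $x-\bar{x}=(x-x_Q)+(x_Q-\bar{x})$, use the variational inequality $\langle \nabla f(\bar{x})-G^f_Q(\bar{x};\gamma),\,x-x_Q\rangle\ge 0$ for the first piece, and the descent lemma with $\gamma\ge\hat{L}$ for the second. The only cosmetic difference is that the paper packages the second step via the quadratic model $\phi$ and the inequality $\phi(x_Q)\ge f(x_Q)$, whereas you invoke the upper bound of Lemma~\ref{growth} directly; these are equivalent.
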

\begin{proof}
Denote $x_Q=x_Q(\bar{x};\gamma), G=G^f_Q(\bar{x};\gamma)$ and let
$$\phi(x)=f(\bar{x})+ \langle \nabla f(\bar{x}), x-\bar{x}\rangle +\frac{\gamma}{2}\|x-\bar{x}\|^2.$$
Then, $\nabla\phi(x)=\nabla f(\bar{x})+\gamma (x-\bar{x})$ and for any $x\in Q$ we have
$$\langle \nabla f(\bar{x})-G, x-x_Q\rangle =\langle \nabla \phi(x_Q), x-x_Q\rangle \geq 0.$$
With this inequality and by Lemma \ref{growth}, we derive that
\begin{subequations}
\begin{align}
f(x)- \frac{\mu}{2}\|Ex-E\bar{x}\|^2&\geq f(\bar{x})+\langle \nabla f(\bar{x}), x-\bar{x}\rangle\\
&=f(\bar{x})+ \langle \nabla f(\bar{x}), x_Q-\bar{x}\rangle +\langle \nabla f(\bar{x}), x-x_Q\rangle\\
&\geq f(\bar{x})+ \langle \nabla f(\bar{x}), x_Q-\bar{x}\rangle +\langle G, x-x_Q\rangle\\
&=\phi(x_Q)-\frac{\gamma}{2}\|x_Q-\bar{x}\|^2+\langle G, x-x_Q\rangle\\
&=\phi(x_Q)+\frac{1}{2\gamma}\|G\|^2+\langle G, x-\bar{x}\rangle\\
&\geq f(x_Q)+\frac{1}{2\gamma}\|G\|^2+\langle G, x-\bar{x}\rangle,
\end{align}
\end{subequations}
where the last inequality follows from $f(x)\leq \phi(x)$ since $\gamma\geq \hat{L}$. Hence, the desired result holds.
\end{proof}

\begin{lemma}[Hoffman's error bound, \cite{Hoffman1952approximate,robinson1973bounds,Mangasarian1987Lip}]\label{hoffman}
Let $E$ be an $m\times n$ matrix and $A$ be a $k\times n$ matrix, and let $b$ be a vector in $\RR^k$. Then, there exists a scalar $\theta>0$ depending on $E$ and $A$ only such that, for any $y$ satisfying $Ax\leq b$ and any $t^*\in \RR^m$ such that the linear system $Ex=t^*, Ax\leq b$ is consistent, there is a point $\bar{y}\in \{u: Eu=t^*, Au\leq b\}$ satisfying $\theta\|y-\bar{y}\|^2\leq \|Ey-t^*\|^2$.
\end{lemma}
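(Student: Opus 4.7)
The plan is to prove Hoffman's error bound via the classical projection–KKT argument, reducing the general case to a finite enumeration of linearly independent subsystems whose condition numbers combine to give the uniform constant $\theta$.

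First, I would set up the Euclidean projection. Given $y$ with $Ay\leq b$, define $\bar y$ to be the Euclidean projection of $y$ onto the polyhedron $P:=\{u:Eu=t^*,\,Au\leq b\}$, which exists and is unique because $P$ is nonempty, closed, and convex by hypothesis. The first-order optimality conditions produce multipliers $\mu\in\RR^m$ and $\lambda\in\RR^k_{\geq 0}$ such that
$$y-\bar y = E^T\mu+A^T\lambda,\qquad \lambda^T(A\bar y-b)=0.$$
Letting $J:=\mathrm{supp}(\lambda)$, complementary slackness forces $A_J\bar y=b_J$ and simplifies the representation to $y-\bar y=E^T\mu+A_J^T\lambda_J$.

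Second, I would apply Carathéodory's theorem to the conic representation of $y-\bar y$ to shrink $J$ (and correspondingly $\lambda_J$) so that the rows of
$$M:=\begin{bmatrix}E\\ A_J\end{bmatrix}$$
are linearly independent while preserving $y-\bar y=E^T\mu+A_J^T\lambda_J$ with $\lambda_J\geq 0$. Crucially there are only finitely many possible matrices $M$ of this form (one per choice of index subset $J\subseteq\{1,\dots,k\}$), and each is determined solely by the data $(E,A)$.

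Third, I would extract two bounds on $\|y-\bar y\|$ and combine them. Since $y$ is feasible for the inequality constraints, $A_J y\leq b_J=A_J\bar y$, so $(A_J(y-\bar y))^T\lambda_J\leq 0$ because $\lambda_J\geq 0$. Taking the inner product of $y-\bar y$ with the KKT expression therefore gives
$$\|y-\bar y\|^2=(E(y-\bar y))^T\mu+(A_J(y-\bar y))^T\lambda_J\leq (Ey-t^*)^T\mu\leq \|Ey-t^*\|\,\|\mu\|.$$
On the other hand, writing $\eta=\bigl(\mu^T,\lambda_J^T\bigr)^T$ and using linear independence of the rows of $M$,
$$\|y-\bar y\|^2=\eta^T MM^T\eta\geq \sigma_{\min}(MM^T)\|\eta\|^2\geq \sigma_{\min}(MM^T)\|\mu\|^2,$$
so $\|\mu\|\leq \sigma_{\min}(MM^T)^{-1/2}\|y-\bar y\|$. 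Combining these two bounds eliminates $\|\mu\|$ and yields $\sigma_{\min}(MM^T)\,\|y-\bar y\|^2\leq \|Ey-t^*\|^2$.

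Finally, set $\theta:=\min\{\sigma_{\min}(MM^T)\}$, with the minimum taken over the finite collection of full-row-rank matrices $M$ obtained by pairing $E$ with subsets of rows of $A$; this $\theta>0$ depends only on $(E,A)$ and gives the claimed bound with $\bar y$ the projection of $y$ onto $P$. The main obstacle is the Carathéodory reduction step, which requires care to retain both the nonnegativity of the multipliers and linear independence of the rows of the resulting submatrix; once that structural reduction is in hand, the remaining linear-algebra and finite-cover arguments are routine.
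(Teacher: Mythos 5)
The paper does not prove this lemma: it is quoted verbatim from the literature (Hoffman, Robinson, Mangasarian--Shiau) and used as a black box, so there is no in-paper argument to compare against. Your proposal is the standard projection--KKT proof of Hoffman's bound, and its overall architecture (project onto the polyhedron, represent $y-\bar y$ in the normal cone, reduce to a linearly independent subsystem, take a minimum over the finitely many resulting condition numbers) is the right one; the two inequalities you combine, $\|y-\bar y\|^2\leq (Ey-t^*)^T\mu\leq\|Ey-t^*\|\,\|\mu\|$ using $E\bar y=t^*$, $A_Jy\leq b_J=A_J\bar y$, $\lambda_J\geq 0$, and $\|\mu\|^2\leq\sigma_{\min}(MM^T)^{-1}\|y-\bar y\|^2$, are correct as stated.

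The one genuine gap is in your Carath\'eodory step: you shrink only $J$ and insist that the rows of $M=\bigl[\begin{smallmatrix}E\\ A_J\end{smallmatrix}\bigr]$ be linearly independent while retaining \emph{all} rows of $E$. If $\operatorname{rank}(E)<m$ (the lemma allows an arbitrary $m\times n$ matrix $E$), no choice of $J$ makes those rows independent, so $MM^T$ is singular, $\sigma_{\min}(MM^T)=0$, and your second bound and the final minimum both degenerate. The repair is to apply the conic Carath\'eodory theorem to the generators $\{E_i^T,-E_i^T\}_{i=1}^m\cup\{A_j^T\}_{j\in J}$ (the equality rows enter with both signs because $\mu$ is free), obtaining $y-\bar y=E_K^T\mu_K+A_{J'}^T\lambda_{J'}$ with $\lambda_{J'}\geq 0$ and the rows of $\bigl[\begin{smallmatrix}E_K\\ A_{J'}\end{smallmatrix}\bigr]$ linearly independent; the first bound survives because $E_K(y-\bar y)$ is a subvector of $Ey-t^*$, so $(E_K(y-\bar y))^T\mu_K\leq\|Ey-t^*\|\,\|\mu_K\|$, and the finite minimum is then taken over index pairs $(K,J')$. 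With that modification the argument is complete.
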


\begin{lemma}\label{opt}
Let $\gamma>0$ and denote $\mathcal{X}=\arg\min_{x\in Q} f(x)$. Then $\bar{x}\in Q$  is optimal for \eqref{cp} if and only if $G^f_Q(\bar{x};\gamma)=0$
\end{lemma}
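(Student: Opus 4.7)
The plan is to prove both directions of the equivalence by exploiting the definition of the gradient mapping as the solution to a strongly convex quadratic subproblem on $Q$, and translating the first-order optimality conditions for that subproblem back and forth with those of \eqref{cp}. Throughout, write $\psi_{\bar{x}}(x):=f(\bar{x})+\langle\nabla f(\bar{x}),x-\bar{x}\rangle+\frac{\gamma}{2}\|x-\bar{x}\|^2$, so that $x_Q(\bar{x};\gamma)=\arg\min_{x\in Q}\psi_{\bar{x}}(x)$ and $G^f_Q(\bar{x};\gamma)=\gamma(\bar{x}-x_Q(\bar{x};\gamma))$.

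For the forward direction, suppose $\bar{x}\in Q$ is optimal for \eqref{cp}. Since $f$ is convex and differentiable and $Q$ is closed convex, the standard first-order optimality condition gives $\langle \nabla f(\bar{x}),x-\bar{x}\rangle\geq 0$ for every $x\in Q$. Plugging this into $\psi_{\bar{x}}$ I would then write $\psi_{\bar{x}}(x)\geq f(\bar{x})+\frac{\gamma}{2}\|x-\bar{x}\|^2=\psi_{\bar{x}}(\bar{x})+\frac{\gamma}{2}\|x-\bar{x}\|^2$ for all $x\in Q$, which identifies $\bar{x}$ as the unique minimizer of the strongly convex $\psi_{\bar{x}}$ over $Q$. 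Hence $x_Q(\bar{x};\gamma)=\bar{x}$ and $G^f_Q(\bar{x};\gamma)=0$.

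For the reverse direction, suppose $G^f_Q(\bar{x};\gamma)=0$, equivalently $x_Q(\bar{x};\gamma)=\bar{x}$. Applying the first-order optimality condition to the strongly convex subproblem $\min_{x\in Q}\psi_{\bar{x}}(x)$ at its minimizer $\bar{x}$ gives $\langle \nabla \psi_{\bar{x}}(\bar{x}),x-\bar{x}\rangle\geq 0$ for every $x\in Q$. Since $\nabla\psi_{\bar{x}}(\bar{x})=\nabla f(\bar{x})$, this is exactly $\langle \nabla f(\bar{x}),x-\bar{x}\rangle\geq 0$ for all $x\in Q$, and the convexity of $f$ then yields $f(x)\geq f(\bar{x})+\langle\nabla f(\bar{x}),x-\bar{x}\rangle\geq f(\bar{x})$ for all $x\in Q$, so $\bar{x}\in \mathcal{X}$.

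There is no real obstacle here; the lemma is essentially a restatement of the fact that the proximal/projected-gradient fixed point equation is equivalent to the variational inequality characterizing a constrained minimizer of a convex differentiable objective. The only subtle point worth flagging is that strong convexity of $\psi_{\bar{x}}$ guarantees a unique subproblem minimizer for any $\gamma>0$, so both the definition of $G^f_Q(\bar{x};\gamma)$ and the identification $x_Q(\bar{x};\gamma)=\bar{x}$ used above are unambiguous regardless of the size of $\gamma$; in particular the statement needs no assumption like $\gamma\geq L$.
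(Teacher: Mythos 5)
Your proof is correct and complete; the paper itself omits the argument, deferring to Lemma A.6 of the cited Wang--Lin reference, and your derivation via the first-order variational inequality for the strongly convex subproblem is exactly the standard argument that reference uses. Your closing observation that only $\gamma>0$ is needed (not $\gamma\geq L$) is also accurate and consistent with the lemma as stated.
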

The proof is identical to the proof of  Lemma A.6 in \cite{wang2013iteration}. Hence we omit the arguments.


\end{document}